\documentclass[reqno]{amsart} 
\usepackage{amssymb}
\usepackage{amsthm}

\usepackage{graphicx}
\usepackage{subcaption}
\usepackage{amsmath}
\usepackage{paralist}
\usepackage[misc]{ifsym}
\usepackage{epsfig} 
\usepackage{epstopdf} 
\usepackage[colorlinks=true]{hyperref}
\hypersetup{urlcolor=blue, citecolor=red}
\allowdisplaybreaks
\usepackage{enumitem}
\usepackage{empheq} 
\usepackage{comment}

\mathtoolsset{showonlyrefs}

\usepackage{pgfplots}
\pgfplotsset{compat=1.18}

\usepackage{caption}

\newtheorem{theorem}{Theorem}[section]
\newtheorem{corollary}[theorem]{Corollary}

\newtheorem{lemma}[theorem]{Lemma}
\newtheorem{proposition}[theorem]{Proposition}

\theoremstyle{definition}
\newtheorem{definition}[theorem]{Definition}
\newtheorem{remark}[theorem]{Remark}
\newtheorem{assumption}[theorem]{Assumption}
\newtheorem*{notation}{Notation}
\newcommand{\ep}{\varepsilon}

\usepackage[a4paper,margin=3cm]{geometry}

\newcommand\be{\begin{equation}}
\newcommand\ee{\end{equation}}
\newcommand\bea{\begin{eqnarray}}
\newcommand\eea{\end{eqnarray}}
\newcommand\beaa{\begin{eqnarray*}}
\newcommand\eeaa{\end{eqnarray*}}
\newcommand\bay{\begin{array}}
\newcommand\eay{\end{array}}
\newcommand\ba{\begin{align}}
\newcommand\ea{\end{align}}
\newcommand\beba{\begin{equation}\left\{\begin{array}{rcl}}
\newcommand\eeba{\end{array}\right.\end{equation}}
\newcommand\bebaa{\begin{equation*}\left\{\begin{array}{rcl}}
\newcommand\eebaa{\end{array}\right.\end{equation*}}
\newcommand\beca{\begin{equation}\left\{\begin{array}{rcll}}
\newcommand\eeca{\end{array}\right.\end{equation}}
\newcommand\becaa{\begin{equation*}\left\{\begin{array}{rcll}}
\newcommand\eecaa{\end{array}\right.\end{equation*}}

\newcommand{\R}{\mathbb{R}}

\newcommand{\N}{\mathbb{N}}

\def \I{\mathrm{I}}
\def \II{\mathrm{I\hspace{-.1em}I}}
\def \III{\mathrm{I\hspace{-.1em}I\hspace{-.1em}I}}

\def \la{\left\langle}
\def \ra{\right\rangle}

\makeatletter

\newcommand{\Proofname}{Proof}
\makeatother

\title[Reaction-diffusion system approximation]{Reaction-diffusion system approximation to the fast diffusion equation} %
\author{Hideki Murakawa} 
\address{Faculty of Advanced Science and Technology, Ryukoku University, 1-5 Yokotani Seta Oe-cho Otsu Shiga 520-2194, Japan}
\email{murakawa@math.ryukoku.ac.jp}

\author{Florian Salin}
\address{IRMA, UMR 7501 Universit\'e de Strasbourg et CNRS, 7 rue René Descartes, 67084, Strasbourg, France}
\address{Inria, IRMA, Universit\'e de Strasbourg, CNRS UMR 7501, F-67000, Strasbourg, France}
\email{fsalin@unistra.fr}

\date{\today}
\subjclass{Primary: 35A35, 35K57, 35K67; Secondary: 35B40.}
\keywords{Reaction-diffusion system approximation, fast diffusion equation, nonlinear diffusion equation, singular limit.}

\begin{document}


\begin{abstract}
This paper proposes a novel reaction-diffusion system approximation tailored for singular diffusion problems, typified by the fast diffusion equation. While such approximation methods have been successfully applied to degenerate parabolic equations, their extension to singular diffusion—where the diffusion coefficient diverges at low densities—has remained unexplored. To address this, we construct an approximating semilinear system characterized by a reaction relaxation parameter and a time-derivative regularizing parameter. We rigorously establish the well-posedness of this system and derive uniform a priori estimates. Using compactness arguments, we prove the convergence of the approximate solutions to the unique weak solution of the target singular diffusion equation under three distinct asymptotic regimes: the simultaneous limit, the limit via a parabolic-elliptic system, and the limit via a uniformly parabolic equation. This approach effectively transfers the diffusion singularity into the reaction terms, yielding a highly tractable system for both theoretical analysis and computation. Finally, we present numerical experiments that validate our theoretical convergence results and demonstrate the practical efficacy of the proposed approximation scheme.
\end{abstract}

\maketitle


\section{Introduction}

The problem considered in this study is the following nonlinear diffusion equation:
\begin{equation}\label{eq:NDE}
    \frac{\partial z}{\partial t} = \Delta \beta(z),
\end{equation}
where $z = z(x,t)$ denotes an unknown physical quantity depending on the spatial variable $x \in \R^d$ $(d \in \N)$ and time $t>0$. Here, $\Delta$ is the Laplace operator with respect to the spatial variable, and $\beta$ represents a constitutive function. Although the equation \eqref{eq:NDE} appears simple in form, it describes various important phenomena in the natural sciences depending on the structure of $\beta$, and exhibits rich mathematical properties accordingly.
For example, when $\beta(z) = z^m$ $(m>1)$, equation \eqref{eq:NDE} is known as the \emph{porous medium equation}~\cite{VazquezPME2006}, which models the flow of a fluid through a porous medium. When $\beta(z) = \max\{z-1,0\} + \min\{z,0\}$, equation \eqref{eq:NDE} corresponds to the \emph{Stefan problem}~\cite{Rubinstein1971Stefan}, which describes phase transition phenomena such as the melting of ice or the solidification of water.
The right-hand side of \eqref{eq:NDE} can be rewritten as
\[
    \Delta \beta(z) = \nabla \cdot (\beta'(z)\nabla z),
\]
so that $\beta'(z)$ may be interpreted as a diffusion coefficient. In the above two examples, this coefficient vanishes at certain points or regions, meaning that the diffusion degenerates there. As a consequence, the support of the solution propagates with finite speed, leading to the appearance of a free boundary.
On the other hand, when $\beta(z)=z^m$ with $0<m<1$, one has $\beta'(z)\to \infty$ as $z \to +0$. This corresponds to a singular diffusion, where the diffusivity becomes extremely large at low densities, allowing the quantity $z$ to escape rapidly to infinity and possibly leading to \emph{finite-time extinction} of the solution~\cite{VazquezSmoothing2006}. In this case, equation \eqref{eq:NDE} is referred to as the \emph{fast diffusion equation}.
Moreover, when $\beta$ is a multivalued function, the equation \eqref{eq:NDE} describes other important models such as the \emph{nonstationary filtration problem}~\cite{AltLuckhaus1983}, which arises in flows through
porous media with pressure-dependent permeability, and the \emph{Hele--Shaw problem}~\cite{DiBenedettoFriedman1984}, which models
interface evolution in situations where viscous fluid is squeezed between narrowly spaced plates.
In addition, when $\beta$ and $z$ take vector values, the resulting \emph{cross-diffusion systems} frequently 
appear in mathematical biology, for instance in the modeling of interacting species in population 
dynamics. A notable example is the Shigesada--Kawasaki--Teramoto model~\cite{skt}, in which the diffusivities depend on the densities of different species, leading to rich and complex pattern formation phenomena.


In the analysis of the nonlinear diffusion problem \eqref{eq:NDE}, the degeneracy, singularity, and cross-diffusion nature of the diffusion coefficient pose significant obstacles. 
However, \emph{reaction-diffusion system approximation} stands as an effective method to circumvent these difficulties. 
This method reinterprets nonlinear diffusion phenomena as the limit of the interaction between reaction and diffusion, originating from the analysis of fast reaction limits by Evans~\cite{evans} and subsequent developments.
Specifically, approximation theories for the Stefan problem were established by Eymard et al.~\cite{ehhp} and Hilhorst et al.~\cite{himn}. Subsequently, Murakawa~\cite{Murakawa2007} developed a general theory of reaction-diffusion system approximation for general degenerate parabolic equations, including the porous medium equation, and extended it to cross-diffusion systems~\cite{m_lscds}. 
In these studies, the original nonlinear problems were formulated as singular limits of semilinear reaction-diffusion systems. The primary advantage of this approach lies in transferring the nonlinearity inherent in the diffusion term to the reaction term, thereby reducing the governing equations to semilinear systems and significantly improving the tractability of both analysis and numerical computation. 
Indeed, leveraging the benefits of reaction-diffusion system approximation, simple linear numerical schemes that are easy to implement and computationally inexpensive yet highly accurate have been developed and analyzed for degenerate parabolic equations and cross-diffusion systems~\cite{m_lscds,m_elscds}. In recent years, the scope of application for methods based on reaction-diffusion system approximation has expanded further, including research on approximating problems involving nonlocal terms using local problems~\cite{IshiiMurakawaTanaka2026,murakawa_tanaka2024,ninomiya2017rda}.


As discussed above, the method of reaction--diffusion system approximation has been developed primarily for degenerate parabolic equations and cross-diffusion systems. 
It has achieved notable success not only in establishing a general theoretical framework but also in several applications, including numerical analysis. 
It is therefore natural to ask whether a similar approximation framework can be developed for singular diffusion problems such as the fast diffusion equation.
However, a fundamental disparity exists in the analytical treatment between singular diffusion, where the diffusion coefficient diverges to infinity, and degenerate diffusion, where the coefficient vanishes. Consequently, within the framework of reaction-diffusion system approximation, the construction of approximate solutions for singular diffusion problems and their rigorous analysis remain unexplored. 
In this paper, we therefore undertake the construction of a novel reaction-diffusion system approximation specifically targeting singular diffusion problems. 
More concretely, we construct an appropriate approximating reaction-diffusion system, prove its convergence in the singular limit rigorously, and validate the resulting convergence behavior through numerical experiments.

The organization of this paper is as follows: In Section 2, we introduce the mathematical formulation of our problem and state the main results. Specifically, we detail the construction of the novel reaction-diffusion system approximation for the singular diffusion equation and present the three main convergence theorems. Section 3 is devoted to the mathematical analysis and the proofs of the main theorems. There, we establish the well-posedness of the approximate problems, derive uniform a priori estimates, and employ compactness arguments to rigorously pass to the respective singular limits. Finally, in Section 4, we present numerical experiments to validate our theoretical convergence results and illustrate the practical performance of the proposed approximation schemes.


\section{Formulation of the problems and main results}

In this section, we establish the mathematical framework for our study and state the main convergence results. To provide proper context, we first review the classical reaction-diffusion system approximation for degenerate parabolic equations in Section 2.1. Building upon this foundation, we construct our novel approximation scheme tailored for singular diffusion equations in Section 2.2. Finally, in Section 2.3, we rigorously define the target problem, introduce the necessary assumptions, and present the main theorems of this paper.

\subsection{Revisiting the approximation for degenerate parabolic equation}

Before constructing the reaction-diffusion system approximation for the singular diffusion problem, we recall a construction of the approximation for the degenerate parabolic equation \eqref{eq:NDE} (see, e.g., \cite{Murakawa2007}).

We assume that $\beta'$, which corresponds to the diffusion coefficient, is non-negative and bounded by $L_\beta$, and we choose a positive constant $\mu$ such that $$0 \le \beta'(s) \le L_\beta < \frac{1}{\mu} \quad \mbox{ for } s\in \R$$ holds. Under this condition, we interpret the unknown function $z$ describing the phenomenon as a superposition $z = \mu u + v$ of two states: a diffusing (mobile, active) state $u$ and a non-diffusing (immobile, inactive) state $v$.

We assume that 
$u$ diffuses with coefficient 
$1/\mu$ whereas $v$ does not diffuse, and model the exchange between these two states by a reaction term with relaxation time 
$\ep>0$. This leads to the following reaction--diffusion system:

\begin{equation} \begin{cases} \displaystyle \frac{\partial u}{\partial t} = \frac{1}{\mu} \Delta u - \frac{1}{\ep} \left( u - \beta(\mu u + v) \right), \\[8pt] \displaystyle \frac{\partial v}{\partial t} = \frac{\mu}{\ep} \left( u - \beta(\mu u + v) \right). \end{cases} \label{eq:RD_approx_DPE} \end{equation}
Multiplying the first equation by $\mu$ and adding it to the second equation cancels out the reaction terms on the right-hand side, yielding the following relation: 
\begin{equation}
    \label{eq:RD_approx_tmp1}
\frac{\partial z}{\partial t} = \Delta u. 
\end{equation}
Taking the formal limit $\ep \to 0$ in \eqref{eq:RD_approx_DPE}, we have $u = \beta(\mu u + v) = \beta(z)$ in the limit. Substituting this into \eqref{eq:RD_approx_tmp1} formally recovers the original nonlinear diffusion problem \eqref{eq:NDE}. 
In this way, the system consists of two states, one diffusing and the other non-diffusing. By rapidly changing their ratio, their superposition realizes nonlinear diffusion. 
The parameter $\mu$ can be interpreted as controlling the upper bound of the diffusion coefficient, while $\ep$ represents the relaxation time between the states.

\subsection{Construction of the approximation for singular diffusion equation}

Analogous to the above discussion, we construct the reaction-diffusion system approximation for the singular diffusion equation. We deal with the alternative form of \eqref{eq:NDE}, given by 
\begin{equation}
\label{eq:FDE}
    \frac{\partial }{\partial t}\alpha(z)=\Delta z
\end{equation}
which is obtained by rewriting the constitutive law with $\alpha = \beta^{-1}$.

We assume that $\alpha'$, which can be regarded as the coefficient of the time derivative in \eqref{eq:FDE}, is non-negative and bounded by $L_\alpha$. We then choose a positive constant $\mu$ such that 
$$0 \le \alpha'(s) \le L_\alpha < \frac{1}{\mu} \quad \mbox{ for } s\in \R.$$ 
Under this condition, we introduce variables $u$ and $v$ and define $z$ as their superposition $z = \mu u + v$.
We consider a system where $u$ evolves in time with coefficient $1/\mu$, while $v$ adjusts instantaneously (represented by the coefficient 0 for its time derivative), modeled by the following system with a relaxation parameter $\ep > 0$: \begin{equation} 
\left\{
\begin{aligned} \displaystyle \frac{1}{\mu} \frac{\partial u}{\partial t} &= \Delta u - \frac{1}{\ep} \left( u - \alpha(\mu u + v) \right), \\ \displaystyle 0 &= \Delta v + \frac{\mu}{\ep} \left( u - \alpha(\mu u + v) \right). \end{aligned} 
\right.
\label{eq:RD_approx_Alt} \end{equation}
Multiplying the first equation by $\mu$ and adding it to the second eliminates the reaction terms on the right-hand side, yielding the following relation: \begin{equation} \label{eq:RD_approx_tmp2} \frac{\partial u}{\partial t} = \Delta z. \end{equation}
Taking the formal limit $\ep \to 0$ in \eqref{eq:RD_approx_Alt}, we obtain $u  = \alpha(z)$ in the limit. Substituting this into \eqref{eq:RD_approx_tmp2} formally recovers \eqref{eq:FDE}. 

In this way, the system consists of two diffusing states, one evolving in time and the other adjusting instantaneously. By rapidly changing their ratio, their superposition realizes the singular diffusion.
The parameter $\mu$ controls the upper bound of the derivative of the constitutive function $\alpha$, while $1/\ep$ represents the exchange rate between the states.

Although we have provided a parabolic-elliptic type approximation in the above construction, we further consider a parabolic-parabolic type reaction-diffusion system approximation by introducing a relaxation in the time direction.
By replacing the zero on the left-hand side of the second equation in \eqref{eq:RD_approx_Alt} with a time derivative term multiplied by a small parameter $\xi > 0$, we obtain the following system: \begin{equation} \left\{ \begin{aligned} \frac{1}{\mu} \frac{\partial u}{\partial t} &= \Delta u - \frac{1}{\ep} \left( u - \alpha(\mu u + v) \right), \\ \xi \frac{\partial v}{\partial t} &= \Delta v + \frac{\mu}{\ep} \left( u - \alpha(\mu u + v) \right). \end{aligned} \right. \label{eq:RD_approx_PP} \end{equation}
This relaxation transforms the elliptic constraint into an evolution equation, which allows us to treat the problem as a fully coupled parabolic system.

\subsection{Assumptions and main results}
\label{subsec:ass-main}

In this subsection, we state the main results of this paper. Based on the formal derivations in the previous subsections, we now rigorously formulate the initial-boundary value problem. We focus on the nonlinear diffusion equation of the following form. Let $T > 0$ and $\Omega$ be a bounded domain in $\mathbb{R}^d$ $(d\in \N)$ with a smooth boundary $\partial\Omega$. We consider the following problem:
\begin{equation*}
\mathrm{(P)} \qquad \left\{
    \begin{aligned}
        \displaystyle \frac{\partial }{\partial t}\alpha(z) &= \Delta z & &\text{in } Q_T := \Omega \times (0, T], \\
        \displaystyle z &= 0 & &\text{on }  \partial\Omega \times (0, T], \\
        \displaystyle \alpha(z(0)) &= \alpha(z_0) & &\text{in } \Omega.
    \end{aligned}
    \right.
\end{equation*}

The solution of $\mathrm{(P)}$ is approximated by the solutions of the following reaction-diffusion system with a reaction relaxation parameter $\varepsilon > 0$, a time-derivative regularizing parameter $\xi \ge 0$, and a fixed parameter $\mu > 0$:
\begin{subequations}
\begin{empheq}[left={(\mathrm{P}_{\varepsilon,\xi}) \quad \empheqlbrace}]{align}
      \frac{1}{\mu}\frac{\partial u_{\varepsilon,\xi}}{\partial t}
        &= \Delta u_{\varepsilon,\xi}
           - \frac{1}{\varepsilon}\bigl(u_{\varepsilon,\xi}
              - \alpha(\mu u_{\varepsilon,\xi}+v_{\varepsilon,\xi})\bigr)
           && \text{in } Q_T, \label{eq:u} \\
      \xi\,\frac{\partial v_{\varepsilon,\xi}}{\partial t}
        &= \Delta v_{\varepsilon,\xi}
           + \frac{\mu}{\varepsilon}\bigl(u_{\varepsilon,\xi}
              - \alpha(\mu u_{\varepsilon,\xi}+v_{\varepsilon,\xi})\bigr)
           && \text{in } Q_T, \label{eq:v} \\
      u_{\varepsilon,\xi} &= v_{\varepsilon,\xi} = 0
           && \text{on } \partial\Omega \times (0, T], 
           \notag \\
      u_{\varepsilon,\xi}(0) &= u_0, \quad v_{\varepsilon,\xi}(0) = v_0
           && \text{in } \Omega. 
           \notag
\end{empheq}
\label{eq:RD_approx_sys}
\end{subequations}  
We define the approximate solutions for the original problem $\mathrm{(P)}$ as
$$z_{\varepsilon,\xi} := \mu u_{\varepsilon,\xi} + v_{\varepsilon,\xi}.$$

We treat solutions to Problem $(\mathrm{P}_{\varepsilon,\xi})$ as strong solutions, while solutions to $(\mathrm{P})$ are considered in the weak sense.
Throughout this paper, the bracket $\la \cdot, \cdot \ra$ denotes the duality pairing between $H^{-1}(\Omega)$ and $H^1_0(\Omega)$, as well as the standard inner product in $L^2(\Omega)$. We define the weak solution of Problem $\mathrm{(P)}$ as follows.

\begin{definition}[Weak Solution of Problem $\mathrm{(P)}$]
\label{def:weak_solution}
    A function $z$ on $Q_T$ is called a weak solution of $\mathrm{(P)}$ with an initial datum $z_0\in L^2(\Omega)$ if
    $$z \in L^2(0, T; H^1_0(\Omega)), \quad \alpha(z) \in L^\infty (0,T;L^2(\Omega)),$$
    and it satisfies
    $$ - \int_0^T \la \frac{\partial \varphi}{\partial t}, \alpha(z) \ra \, \mathrm{d}t + \int_0^T \la \nabla z, \nabla \varphi \ra \, \mathrm{d}t = \la \alpha(z_0), \varphi(0)\ra $$
    for all test functions $\varphi \in H^1(0, T; L^2(\Omega)) \cap L^2(0, T; H^1_0(\Omega))$ satisfying $\varphi(T) = 0$.
\end{definition}

To state the main convergence results, we impose the following assumptions on the data and the parameter $\mu$.

\begin{assumption} \label{ass:A}
    \leavevmode
\begin{enumerate}[label=(H\arabic*), leftmargin=*]
\item[(H1)] $\alpha: \mathbb{R} \to \mathbb{R}$ is a non-decreasing, locally Lipschitz continuous function with $\alpha(\mathbb{R})=\mathbb{R}$ and $\alpha(0)=0$.
        \item[(H2)] 
        The initial data $z_0, u_0, v_0 \in L^\infty(\Omega)$ satisfy  $u_0 = \alpha(z_0)$ and $v_0 = z_0 - \mu\alpha(z_0)$.
        \item[(H3)] Let $L_\alpha > 0$ be the Lipschitz constant of $\alpha$ on the bounded interval determined by the $L^\infty$-bounds of the initial data. We assume that $\mu$ is chosen to satisfy $L_\alpha < 1/\mu$.
    \end{enumerate}
\end{assumption}

Under Assumption $\mathrm{(H2)}$,  uniform $L^\infty$ estimates for the approximate solutions are guaranteed as will be established in Lemma \ref{lem:L2H1_Linf}. Because the solutions are confined to a bounded range, we can naturally truncate and linearly extend $\alpha$ outside this range to make it a globally Lipschitz continuous function with the Lipschitz constant $L_\alpha$. Consequently, without loss of generality, we can treat $\alpha$ as globally Lipschitz continuous in the subsequent mathematical analysis.

We investigate three different asymptotic regimes to recover the target problem $\mathrm{(P)}$, each possessing its own mathematical and practical significance. 
First, we state the convergence results regarding the simultaneous limit and the relaxation limit to the parabolic-elliptic system. 

\begin{theorem}[Simultaneous and Parabolic-Elliptic Convergence] \label{thm:convergence_target}
Suppose that Assumptions $\mathrm{(H1)}$--$\mathrm{(H3)}$ hold.
Then, both in the simultaneous limit as $\varepsilon \to 0$ and $\xi \to 0$, and in the limit as $\varepsilon \to 0$ with a fixed $\xi = 0$, the approximate solutions converge in the following sense:
\begin{align*}
    u_{\varepsilon,\xi} &\to \alpha(z), \quad \alpha(z_{\varepsilon,\xi}) \to \alpha(z) \quad \text{strongly in } L^2(Q_T), \\
    v_{\varepsilon,\xi} &\rightharpoonup z - \mu\alpha(z), \quad z_{\varepsilon,\xi} \rightharpoonup z \quad \text{weakly in } L^2(0,T;H^1_0(\Omega)).
\end{align*}
Here, $z$ is the unique weak solution of Problem $\mathrm{(P)}$.
\end{theorem}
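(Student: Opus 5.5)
The plan is to combine uniform a priori estimates with compactness and then identify the limit. Strong convergence of $u$ comes from time-derivative bounds, and strong convergence of $\alpha(z_{\varepsilon,\xi})$ from the small gap $u_{\varepsilon,\xi}-\alpha(z_{\varepsilon,\xi})$.

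\textbf{Step 1: energy estimates.} First I take the uniform $L^\infty$ bounds of Lemma \ref{lem:L2H1_Linf} for granted. Since they hold, we may regard $\alpha$ as globally Lipschitz with $L_\alpha<1/\mu$. Set $w:=u-\alpha(z)$ and drop the subscripts. I test \eqref{eq:u} by $\mu u$ and \eqref{eq:v} by $v$, then add. The reaction contributions combine into
\[
-\frac{\mu}{\varepsilon}\int_\Omega (u-\alpha(z))(u-v/\mu\cdot 0)\,\dots,
\]
so it is cleaner to test \eqref{eq:u} by $\mu(u-\alpha(z))$ and \eqref{eq:v} by $z=\mu u+v$. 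The reaction terms then give $-\frac{\mu}{\varepsilon}\|w\|^2$, which is the dissipation I want.

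The time terms give $\frac12\frac{d}{dt}\|u\|^2-\langle u_t,\alpha(z)\rangle+\xi\langle v_t,z\rangle$. Writing $v=z-\mu u$, the identity $\mu u_t+ \xi v_t$ relates these to $\frac{d}{dt}\int \Phi(z)$ with $\Phi'=\alpha$. The gradient terms give $\|\nabla z\|^2$ plus a cross term, which I control using $\mu L_\alpha<1$.

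If this exact bookkeeping fails, I fall back on the standard alternative: test \eqref{eq:RD_approx_tmp2}-type relations separately. Concretely, $\partial_t u+\xi\partial_t v=\Delta z$ (the $\xi$-version of \eqref{eq:RD_approx_tmp2}) tested with $z$, and \eqref{eq:u} tested with $\varepsilon$-weighted quantities. The goal is uniformly in $\varepsilon,\xi$:
\[
\|z\|_{L^2(H^1_0)}+\|\nabla u\|_{L^2(Q_T)}+\|v\|_{L^2(H^1_0)}+\varepsilon^{-1/2}\|w\|_{L^2(Q_T)}+\|\partial_t u\|_{L^2(H^{-1})}+\xi^{1/2}\|v\|_{L^\infty(L^2)}\le C.
\]
Here $\|\nabla u\|$ comes from \eqref{eq:u} tested with $u$, using $|\alpha(z)|\le L_\alpha|z|$ and the $z$-bound. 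The $H^{-1}$ bound on $u_t$ then follows from \eqref{eq:u} together with $\varepsilon^{-1}\|w\|$ being controlled in $H^{-1}$. I expect this step to be the main obstacle: extracting the $1/\varepsilon$ dissipation while keeping the $\xi$-terms of the right sign, uniformly as both parameters vanish. The condition $L_\alpha<1/\mu$ is what makes the quadratic form coercive. A time-derivative estimate (testing by $u_t$ and $\xi v_t$, using (H2) so that $w(0)=0$) may alternatively give $u_t\in L^2(Q_T)$ directly.

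\textbf{Step 2: compactness.} By Aubin--Lions, $u_{\varepsilon,\xi}$ is relatively compact in $L^2(Q_T)$, with a subsequence converging to some $\bar u$ strongly. Also $z_{\varepsilon,\xi}\rightharpoonup z$ and $v_{\varepsilon,\xi}\rightharpoonup \bar v$ weakly in $L^2(0,T;H^1_0)$. Since $\|w\|\le C\varepsilon^{1/2}$, we get $\alpha(z_{\varepsilon,\xi})\to\bar u$ strongly. To identify $\bar u=\alpha(z)$ I use Minty's trick for the monotone $\alpha$, whose graph is maximal monotone. Because $\alpha(z_{\varepsilon,\xi})\to\bar u$ strongly and $z_{\varepsilon,\xi}\rightharpoonup z$ weakly, we have $\int\alpha(z_{\varepsilon,\xi})z_{\varepsilon,\xi}\to\int\bar u z$, which gives $\bar u=\alpha(z)$. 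Consequently $\bar v=z-\mu\alpha(z)$.

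\textbf{Step 3: limit equation and uniqueness.} I take the weak form of $\partial_t(u+\xi v)=\Delta z$ with a test function $\varphi$ and pass to the limit. The term $\xi\int\langle v,\varphi_t\rangle$ vanishes because $\xi^{1/2}\|v\|$ is bounded, or because $v$ is bounded in $L^\infty$. The initial term is $\langle u_0+\xi v_0,\varphi(0)\rangle\to\langle\alpha(z_0),\varphi(0)\rangle$ by (H2). This shows that $z$ is a weak solution in the sense of Definition \ref{def:weak_solution}. Uniqueness follows from the classical $H^{-1}$ contraction argument: test the difference of two solutions with $(-\Delta)^{-1}(\alpha(z_1)-\alpha(z_2))$, integrated in time, and use monotonicity. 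Uniqueness upgrades subsequence convergence to convergence of the whole family, in both regimes ($\xi\to0$ jointly, or $\xi=0$ fixed). In the $\xi=0$ case all $\xi$-terms are simply absent.
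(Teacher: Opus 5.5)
There is a genuine gap. Your Steps 2 and 3 are sound: the weak--strong closedness of the graph of $\alpha$, the passage to the limit in $\partial_t(u+\xi v)=\Delta z$, and uniqueness by an $H^{-1}$ argument all work. The problem is that the compactness in Step 2 relies on a uniform bound for $\partial_t u_{\varepsilon,\xi}$ in $L^2(0,T;H^{-1}(\Omega))$, and you do not have one.

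From \eqref{eq:u}, $\partial_t u=\mu\Delta u-\frac{\mu}{\varepsilon}w$. The dissipation bound $\|w\|_{L^2(Q_T)}\le C\varepsilon^{1/2}$ only gives $\varepsilon^{-1}\|w\|_{L^2(Q_T)}\le C\varepsilon^{-1/2}$. So the claim that $\varepsilon^{-1}w$ is controlled in $H^{-1}$ is exactly what would have to be proved.
\begin{itemize}
\item For $\xi=0$ the claim is true, since \eqref{eq:v} gives $\frac{\mu}{\varepsilon}w=-\Delta v$.
\item For $\xi>0$ you only have $\frac{\mu}{\varepsilon}w=\xi\partial_t v-\Delta v$. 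A bound on $\xi\partial_t v$ uniform in $\varepsilon$ is equivalent to the bound you are after, so the argument is circular. In the simultaneous limit, Aubin--Lions cannot be applied to $u$ alone.
\end{itemize}
Your alternative test by $\partial_t u$ and $\xi\partial_t v$ is not worked out either. With constant weights, the stiff terms $\varepsilon^{-1}\langle w,\partial_t u\rangle$ and $\varepsilon^{-1}\langle w,\partial_t v\rangle$ form an exact time derivative only if $\alpha'$ is constant.

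The missing idea, which the paper uses, is to apply Aubin--Lions to $u+\xi v$. This quantity is bounded in $L^2(0,T;H^1_0(\Omega))$, and its time derivative is exactly $\Delta z$, which is bounded in $L^2(0,T;H^{-1}(\Omega))$. Since $\xi v\to0$ in $L^2(Q_T)$ by the $L^\infty$ bound, the strong convergence passes to $u$. You write this identity in Step 3, but it is needed already for compactness.

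Second, you rightly call the estimate $\|w\|_{L^2(Q_T)}=O(\varepsilon^{1/2})$ the crux, but you do not derive it. None of your proposed tests closes:
\begin{itemize}
\item Testing \eqref{eq:u} by $\mu w$ and \eqref{eq:v} by $z$ gives reaction terms $-\frac{\mu}{\varepsilon}\|w\|^2+\frac{\mu}{\varepsilon}\langle w,z\rangle$. The second term is unsigned and of order $\varepsilon^{-1}$. The time terms $\langle\partial_t u,w\rangle+\xi\langle\partial_t v,z\rangle$ are not the derivative of any functional of $(u,v)$.
\item Testing \eqref{eq:u} by $u$ leaves an unsigned term $\varepsilon^{-1}\langle w,u\rangle$.
\item Testing $\partial_t(u+\xi v)=\Delta z$ by $z$ leaves the non-exact cross terms $\langle\partial_t u,v\rangle+\mu\xi\langle\partial_t v,u\rangle$.
\end{itemize}

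What you need are tests $\mu\phi(u)$ and $\psi(v)$ with $\phi,\psi$ non-decreasing. Then the time terms are derivatives of convex potentials, the gradient terms are non-negative, and the remaining requirement is that the reaction pairing $\frac{\mu}{\varepsilon}\langle w,\phi(u)-\psi(v)\rangle$ be coercive in $w$.

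The paper (Lemma \ref{lem:cvg_alpha_z}) takes $\phi=\alpha^{-1}_\delta$ and $\psi=\eta^{-1}$. The Yosida regularization is needed because $\alpha^{-1}$ is not Lipschitz in the singular case. Writing $w=\frac{1}{\mu}(\eta(z)-v)$ and splitting $\alpha^{-1}_0(u)-\eta^{-1}(v)=(\alpha^{-1}_0(u)-z)+(z-\eta^{-1}(v))$ gives the lower bound $L_\alpha^{-1}|w|^2+\frac{1}{\mu}|\eta(z)-v|^2$.

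The pair $\phi(u)=u$, $\psi=\zeta^{-1}$ also works. With $u^*=\zeta^{-1}(v)$ one has $\alpha(\mu u^*+v)=u^*$, so $\mu L_\alpha<1$ gives $w\,(u-u^*)\ge|w|^2$ pointwise. This is the $p=2$ case of Lemma \ref{lem:L2H1_Linf} with the reaction term kept rather than discarded. The two tests of that lemma, not the ones you propose, also give the uniform $L^2(0,T;H^1_0(\Omega))$ bounds on $u$ and $v$.
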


Next, taking the relaxation limit $\xi \to 0$ for a fixed $\varepsilon > 0$ reduces the fully parabolic system to a parabolic-elliptic system. 

\begin{theorem}[Convergence of Relaxation] \label{thm:convergence_relaxation}
    Suppose that Assumptions $\mathrm{(H1)}$--$\mathrm{(H3)}$ hold. For any fixed $\varepsilon > 0$, as $\xi \to 0$, the approximate solutions converge in the following sense:
    \begin{equation*}
        u_{\varepsilon,\xi} \to u_{\varepsilon,0} \quad \text{strongly in } L^2(Q_T), \quad \text{and} \quad v_{\varepsilon,\xi} \to v_{\varepsilon,0} \quad \text{strongly in } L^2(0,T;H^1_0(\Omega)). 
    \end{equation*}
    Here, $(u_{\varepsilon,0}, v_{\varepsilon,0})$ is the solution of $(\mathrm{P}_{\varepsilon,\xi})$ with $\xi=0$.
\end{theorem}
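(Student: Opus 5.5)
The plan is to run a stability (energy) estimate on the differences $U:=u_{\varepsilon,\xi}-u_{\varepsilon,0}$ and $V:=v_{\varepsilon,\xi}-v_{\varepsilon,0}$ for fixed $\varepsilon$, using the uniform a priori bounds of Lemma \ref{lem:L2H1_Linf}, which I take to be independent of $\xi$. Writing $R(u,v):=u-\alpha(\mu u+v)$ and $W:=\mu U+V$, the equations for the differences read
\begin{align}
\tfrac1\mu \partial_t U &= \Delta U - \tfrac1\varepsilon\bigl(R(u_{\varepsilon,\xi},v_{\varepsilon,\xi})-R(u_{\varepsilon,0},v_{\varepsilon,0})\bigr),\\
\xi\,\partial_t v_{\varepsilon,\xi} &= \Delta V + \tfrac{\mu}{\varepsilon}\bigl(R(u_{\varepsilon,\xi},v_{\varepsilon,\xi})-R(u_{\varepsilon,0},v_{\varepsilon,0})\bigr).
\end{align}
The term $\xi\partial_t v_{\varepsilon,\xi}$ acts as a perturbation of the elliptic equation solved by $v_{\varepsilon,0}$.

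The structure suggests testing the first equation with $\mu U$ and the second with $V$, then adding. The left-hand side becomes $\frac12\frac{d}{dt}\|U\|^2+\xi\langle\partial_t v_{\varepsilon,\xi},V\rangle$. The diffusion terms give $-\mu\|\nabla U\|^2-\|\nabla V\|^2$. The reaction terms combine into $-\frac{\mu}{\varepsilon}\langle \delta R, U-V\rangle$ with $\delta R = U-\delta\alpha$, where $\delta\alpha=\alpha(z_{\varepsilon,\xi})-\alpha(z_{\varepsilon,0})=a\,W$ and $0\le a\le L_\alpha<1/\mu$. This term is not sign-definite.

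I would therefore bound it crudely, using the fixed $\varepsilon$. Since $|\delta R|\le C(|U|+|V|)$, Young's inequality gives
\[
\tfrac{\mu}{\varepsilon}|\langle\delta R,U-V\rangle|\le \tfrac12\|\nabla V\|^2 + C_\varepsilon\|U\|^2 ,
\]
after using Poincaré on $V$ to absorb the $V^2$ parts into $\|\nabla V\|^2$. Doing this requires the coefficient in front of $\|V\|^2$ to be small relative to the Poincaré constant. That may fail for small $\varepsilon$, so a cleverer choice is needed. I would instead test the second equation with $W$, or with $V$ combined with $U$ in a way that reproduces the monotone structure $\langle \delta R,\mu U+V\rangle$. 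Note that $\langle U-a W, W\rangle$ is not monotone either. The most robust option is to treat $V$ as determined elliptically: from the $V$ equation, $-\Delta V + \frac{\mu}{\varepsilon}(\ldots)$ is a monotone operator in $V$ for fixed $U$, because $v\mapsto -\alpha(\mu u+v)$ is nonincreasing, so $-R$ is increasing in $v$. Testing the $V$-equation with $V$ then yields
\[
\|\nabla V\|^2 \le C_\varepsilon \|U\|\|V\| + \xi|\langle\partial_t v_{\varepsilon,\xi},V\rangle|,
\]
hence $\|\nabla V\|^2\le C_\varepsilon\|U\|^2 + \xi|\langle\partial_t v_{\varepsilon,\xi},V\rangle|$. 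Inserting this into the $U$ energy identity, tested with $U$ alone, gives
\[
\frac{d}{dt}\|U\|^2 + \|\nabla U\|^2 \le C_\varepsilon\|U\|^2 + C\xi|\langle \partial_t v_{\varepsilon,\xi},V\rangle|.
\]

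The main obstacle is controlling $\xi\int_0^T|\langle\partial_t v_{\varepsilon,\xi},V\rangle|$. My plan is to derive an extra a priori estimate for fixed $\varepsilon$: testing the $v$-equation with $\partial_t v_{\varepsilon,\xi}$ gives $\xi\|\partial_t v\|^2_{L^2(Q_T)}+\sup_t\|\nabla v\|^2\le C_\varepsilon$. This works provided $\|\nabla v_0\|$ is finite and $\partial_t u$ is controlled; the latter comes from testing the $u$-equation with $\partial_t u$, using that the initial data are well prepared by (H2), so that $R(u_0,v_0)=0$. If $v_0\notin H^1_0$, I would regularize the initial data or use a $\sqrt{\xi}$-weighted time-derivative bound. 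With this in hand,
\[
\xi\int|\langle\partial_t v,V\rangle|\le \sqrt{\xi}\,\bigl(\sqrt\xi\|\partial_t v\|_{L^2(Q_T)}\bigr)\|V\|_{L^2(Q_T)}\le C_\varepsilon\sqrt{\xi}.
\]

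Since $U(0)=0$, Gronwall's inequality gives $\sup_t\|U\|^2+\|\nabla U\|^2_{L^2(Q_T)}\le C_\varepsilon\sqrt\xi$. The elliptic bound then yields $\|\nabla V\|^2_{L^2(Q_T)}\le C_\varepsilon\sqrt\xi$. Both differences tend to $0$ as $\xi\to0$, which gives the strong convergences in Theorem \ref{thm:convergence_relaxation}, with an explicit rate.
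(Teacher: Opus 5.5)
The gap is in the one step everything hinges on. Your reduction is sound. The $V$-equation tested with $V$, together with the monotonicity of $\alpha$, Poincar\'e and Young, gives $\|\nabla V\|^2\le C_\varepsilon\|U\|^2-2\xi\langle\partial_t v_{\varepsilon,\xi},V\rangle$. Inserting this into the $U$-energy identity yields a Gronwall inequality for $\|U\|^2$ whose only source is $\xi\int_0^t\langle\partial_t v_{\varepsilon,\xi},V\rangle\,\mathrm{d}s$.

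The bound you propose for that source, $\xi\|\partial_t v_{\varepsilon,\xi}\|_{L^2(Q_T)}^2\le C_\varepsilon$, is not available under the hypotheses. Testing with $\partial_t u_{\varepsilon,\xi}$ and $\partial_t v_{\varepsilon,\xi}$ puts $\tfrac12\|\nabla u_0\|^2+\tfrac12\|\nabla v_0\|^2$ on the right-hand side, whereas $\mathrm{(H2)}$ only gives $u_0,v_0\in L^\infty(\Omega)$. You flag this for $v_0$, but the same problem affects $u_0$, and well-preparedness $R(u_0,v_0)=0$ says nothing about $\nabla u_0$. The bound is in fact false for general data: if $\partial_t v_{\varepsilon,\xi}\in L^2(Q_T)$, then \eqref{eq:v} gives $\Delta v_{\varepsilon,\xi}\in L^2(Q_T)$, hence $v_{\varepsilon,\xi}\in C([0,T];H^1_0(\Omega))$ and so $v_0\in H^1_0(\Omega)$. (For $H^1_0$ data it does hold, provided the coupling term is rewritten through $\alpha(z)\partial_t v=\partial_t A(z)-\mu\alpha(z)\partial_t u$ with $A'=\alpha$, and $u\,\partial_t v=\partial_t(uv)-v\,\partial_t u$. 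A plain Young inequality there would cost a factor $1/\xi$.)

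Your fallbacks are not worked out. Regularizing the data additionally requires an $L^2$ continuous-dependence estimate on the initial data, uniform in $\xi$, for both $(\mathrm{P}_{\varepsilon,\xi})$ and $(\mathrm{P}_{\varepsilon,0})$. That estimate is true. It follows from exactly the $(\mu U,V)$ test you abandoned, since $-\delta R\,(U-V)=-(1-a\mu)U^2+(1-a\mu+a)UV-aV^2$ leaves only an arbitrarily small multiple of $V^2$ to absorb when $\varepsilon$ is fixed. But it is not in your proposal, and the ``$\sqrt\xi$-weighted bound'' is not specified. Either way, the explicit rate $C_\varepsilon\sqrt\xi$ is not obtained for data satisfying only $\mathrm{(H2)}$.

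The idea you are missing is the one the paper uses for the term $\I$. Keep the $\xi$-term signed, and use only the bound on $\xi\partial_t v_{\varepsilon,\xi}$ in $L^2(0,T;H^{-1}(\Omega))$, which comes for free from \eqref{eq:v} and Lemma~\ref{lem:L2H1_Linf}. Split $V=(v_{\varepsilon,\xi}-v_h)-(v_{\varepsilon,0}-v_h)$, where $v_h$ is a time-mollified, cut-off version of $v_{\varepsilon,0}$ with $v_h(0)=0$, $\|\partial_t v_h\|_{L^2(Q_T)}\le C/h$, and $v_h\to v_{\varepsilon,0}$ in $L^2(0,T;H^1_0(\Omega))$. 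The piece against $v_{\varepsilon,\xi}$ is an exact time derivative, the piece against $v_h$ is integrated by parts in time, and the remainder is estimated by duality. This gives
\[
\sup_{t\in[0,T]}\Bigl|\xi\int_0^t\langle\partial_t v_{\varepsilon,\xi},V\rangle\,\mathrm{d}s\Bigr|\le C\Bigl(\xi+\frac{\xi}{h}+\|v_{\varepsilon,0}-v_h\|_{L^2(0,T;H^1_0(\Omega))}\Bigr),
\]
so letting $\xi\to0$ and then $h\to0$ closes the argument, without a rate.

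Once fed with this bound, your Gronwall estimate on $U$ is actually an improvement on the paper. The paper gets $u_{\varepsilon,\xi}\to u_{\varepsilon,0}$ from Aubin--Lions, which gives only a subsequence and tacitly requires identifying the limit as the solution of $(\mathrm{P}_{\varepsilon,0})$. Your route gives convergence of the whole family directly.
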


\begin{remark}[Convergence via $(P_{\varepsilon,0})$]
    Combining Theorem \ref{thm:convergence_target} and Theorem \ref{thm:convergence_relaxation}, we can recover the solution of the target problem $\mathrm{(P)}$ by taking iterated limits. Specifically, we first take the limit $\xi \to 0$ for a fixed $\varepsilon > 0$ to obtain the parabolic-elliptic system, and subsequently take the limit $\varepsilon \to 0$ to reach the original singular diffusion problem. 
\end{remark}

In addition to the above limits, we can also consider an alternative iterated limit: first taking $\varepsilon \to 0$ for a fixed $\xi > 0$, and subsequently taking $\xi \to 0$. 

For a fixed $\xi > 0$ satisfying $\xi \le 1/\mu$, we define the regularized function $\alpha_\xi : \mathbb{R} \to \mathbb{R}$ by
\begin{equation}
    \alpha_\xi(s) := \xi s + (1 - \mu\xi)\alpha(s).
\end{equation}
We then introduce the intermediate problem for a fixed $\xi > 0$:
\begin{equation*}
\mathrm{(P_{0,\xi})} \qquad \left\{
    \begin{aligned}
        \frac{\partial }{\partial t}\alpha_\xi(z_\xi) &= \Delta z_\xi & &\text{in } Q_T, \\
        z_\xi &= 0 & &\text{on }  \partial\Omega \times (0, T], \\
                \alpha_\xi(z_\xi(0)) &= \alpha_\xi(z_0) & &\text{in } \Omega.
    \end{aligned}
    \right.
\end{equation*}
The weak solution of $\mathrm{(P_{0,\xi})}$ is defined similarly to Definition~\ref{def:weak_solution}.

\begin{theorem}[Convergence via $(P_{0,\xi})$] \label{thm:iterated_limit_eps_xi}
    Suppose that Assumptions $\mathrm{(H1)}$--$\mathrm{(H3)}$ hold. Let $\xi \in (0, 1/\mu]$ be fixed. 
    As $\varepsilon \to 0$, the approximate solutions converge in the following sense:
    \begin{align*}
        u_{\varepsilon,\xi} &\to \alpha(z_\xi), \quad v_{\varepsilon,\xi} \to z_\xi - \mu\alpha(z_\xi), \quad z_{\varepsilon,\xi} \to z_\xi\\
        &\quad \text{strongly in } L^2(Q_T) \text{ and weakly in } L^2(0,T;H^1_0(\Omega)).
    \end{align*}
    Here, $z_\xi$ is the unique weak solution of the intermediate problem $\mathrm{(P_{0,\xi})}$.
    
    Furthermore, as $\xi \to 0$, the sequence $\{z_\xi\}$ converges in the following sense:
    \begin{equation*}
        z_\xi \to z \quad \text{strongly in } L^2(Q_T) \text{ and weakly in } L^2(0,T;H^1_0(\Omega)),
    \end{equation*}
    where $z$ is the unique weak solution of the target problem $\mathrm{(P)}$.
\end{theorem}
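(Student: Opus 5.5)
The proof splits into two parts: the limit $\varepsilon \to 0$ for fixed $\xi \in (0,1/\mu]$, and then the limit $\xi \to 0$ for the solutions $z_\xi$ of $\mathrm{(P_{0,\xi})}$.

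\textbf{Step 1: the limit $\varepsilon\to0$ for fixed $\xi$.} Multiply \eqref{eq:u} by $\mu$ and add \eqref{eq:v}. This gives the exact identity
\[
\frac{\partial}{\partial t}\bigl(u_{\varepsilon,\xi}+\xi v_{\varepsilon,\xi}\bigr)=\Delta z_{\varepsilon,\xi}.
\]
Observe that
\[
u+\xi v=\xi z+(1-\mu\xi)u .
\]
So once $u=\alpha(z)$ in the limit, this quantity is exactly $\alpha_\xi(z)$. By (H2), the initial value $u_0+\xi v_0$ equals $\alpha_\xi(z_0)$.

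For the estimates, I would use the uniform $L^\infty$ and $L^2(0,T;H^1_0)$ bounds of Lemma~\ref{lem:L2H1_Linf}, together with the reaction estimate
\[
\frac1\varepsilon\|u_{\varepsilon,\xi}-\alpha(z_{\varepsilon,\xi})\|_{L^2(Q_T)}^2\le C .
\]
I expect that lemma, or the energy estimates preceding Theorem~\ref{thm:convergence_target}, to provide this. If not, I would derive it by testing \eqref{eq:u} with $u-\alpha(z)$, testing \eqref{eq:v} with $(z-\mu\alpha(z))/\mu$, and using $L_\alpha<1/\mu$ to get the coercive sign.

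For fixed $\xi>0$ both $u$ and $v$ carry time derivatives. Standard energy estimates (testing with $\partial_t u$ and $\partial_t v$) should then give time-derivative bounds in $L^2(0,T;H^{-1})$ uniform in $\varepsilon$. By Aubin--Lions, $u_{\varepsilon,\xi}$ and $v_{\varepsilon,\xi}$, and hence $z_{\varepsilon,\xi}$, are compact in $L^2(Q_T)$ and weakly compact in $L^2(0,T;H^1_0)$. This is where $\xi>0$ helps compared with Theorem~\ref{thm:convergence_target}.

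The reaction estimate gives $u-\alpha(z)\to0$. Since $\alpha$ is Lipschitz, the limits satisfy $u=\alpha(z_\xi)$ and $v=z_\xi-\mu\alpha(z_\xi)$. Passing to the limit in the weak form of the summed identity shows that $z_\xi$ is a weak solution of $\mathrm{(P_{0,\xi})}$.

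Uniqueness of weak solutions of $\mathrm{(P_{0,\xi})}$ follows from the standard $H^{-1}$ argument for monotone $\alpha_\xi$. I would take the difference of two solutions and test with $(-\Delta)^{-1}(\alpha_\xi(z_1)-\alpha_\xi(z_2))$, using $(\alpha_\xi(z_1)-\alpha_\xi(z_2))(z_1-z_2)\ge 0$. In fact $\alpha_\xi$ is strictly increasing, since $\alpha_\xi'\ge\xi$. Uniqueness upgrades subsequence convergence to convergence of the whole family.

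\textbf{Step 2: the limit $\xi\to0$.} All the bounds above pass to $z_\xi$ uniformly in $\xi$, by weak lower semicontinuity:
\begin{itemize}
\item $\|z_\xi\|_{L^\infty}$ is bounded;
\item $\|z_\xi\|_{L^2(0,T;H^1_0)}$ is bounded;
\item $\|\partial_t\alpha_\xi(z_\xi)\|_{L^2(0,T;H^{-1})}$ is bounded, directly from the equation.
\end{itemize}
This is the main obstacle: strong $L^2$ compactness of $z_\xi$ itself. Aubin--Lions gives compactness only of $\alpha_\xi(z_\xi)$, and $\alpha$ may be flat, so it does not by itself control $z_\xi$.

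My first attempt is the Alt--Luckhaus-type argument. For $h>0$, test the equation by $z_\xi(t+h)-z_\xi(t)$ integrated over $t$, and use monotonicity in the form
\[
(\alpha_\xi(a)-\alpha_\xi(b))(a-b)\ge \frac{1}{L}(\alpha_\xi(a)-\alpha_\xi(b))^2 .
\]
This yields a time-translate estimate for $\alpha_\xi(z_\xi)$. Combined with the spatial $H^1$ bound and Kolmogorov--Riesz, it gives strong convergence $\alpha_\xi(z_\xi)\to w$, and hence $\alpha(z_\xi)\to w$ as well, because $\xi z_\xi\to0$.

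Then Minty's trick identifies $w=\alpha(z)$. It uses $\lim\int\alpha(z_\xi)z_\xi=\int wz$, which holds by the strong--weak product, together with the weak convergence $z_\xi\rightharpoonup z$. This already suffices to pass to the limit in the weak formulation and identify $z$ as the weak solution of $\mathrm{(P)}$; uniqueness then gives convergence of the whole family.

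For the strong convergence of $z_\xi$ in $L^2(Q_T)$, I would use the $H^{-1}$ contraction. Subtract the weak forms of $\mathrm{(P_{0,\xi})}$ and $\mathrm{(P)}$ and test with $(-\Delta)^{-1}(\alpha_\xi(z_\xi)-\alpha(z))$. This yields
\[
\int_0^T\!\langle \alpha_\xi(z_\xi)-\alpha(z),\,z_\xi-z\rangle\,\mathrm{d}t\to0 .
\]
The integrand is
\[
(\alpha_\xi(z_\xi)-\alpha(z))(z_\xi-z)=\xi(z_\xi-z)^2+(1-\mu\xi)(\alpha(z_\xi)-\alpha(z))(z_\xi-z)+\xi(z-\mu\alpha(z))(z_\xi-z).
\]
The last term vanishes in the limit, and the middle term is nonnegative. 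However, the first term only yields $\xi\|z_\xi-z\|^2\to0$, which does not give $\|z_\xi-z\|_{L^2(Q_T)}\to0$.

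So this is the delicate point. Exploiting the equation $\Delta z_\xi=\partial_t\alpha_\xi(z_\xi)$ to get $\|\nabla z_\xi\|_{L^2(Q_T)}\to\|\nabla z\|_{L^2(Q_T)}$ also leads back to a lower bound. The remaining route is an energy identity with the convex primitive of $\alpha_\xi$, combined with the convergence of $\alpha_\xi(z_\xi)$, which upgrades weak to strong convergence in $L^2(0,T;H^1_0)$.
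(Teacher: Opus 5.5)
There is a genuine gap in Step~1, at the compactness step. You assert that testing \eqref{eq:u} and \eqref{eq:v} with $\partial_t u_{\varepsilon,\xi}$ and $\partial_t v_{\varepsilon,\xi}$ gives bounds on each time derivative separately, uniform in $\varepsilon$. These test functions do not produce such bounds.

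\begin{itemize}
\item Each equation on its own contains the stiff term $\varepsilon^{-1}(u_{\varepsilon,\xi}-\alpha(z_{\varepsilon,\xi}))$. Lemma~\ref{lem:cvg_alpha_z} controls this term only by $C\varepsilon^{-1/2}$ in $L^2(Q_T)$.
\item Testing with the time derivatives leaves the cross term
\[
\frac{\mu}{\varepsilon}\int_0^T\bigl\langle u_{\varepsilon,\xi}-\alpha(z_{\varepsilon,\xi}),\,\partial_t u_{\varepsilon,\xi}-\partial_t v_{\varepsilon,\xi}\bigr\rangle\,\mathrm{d}t .
\]
It has no sign and is not an exact time derivative, because the reaction terms have no gradient structure.
\item The estimate would also start from $\|\nabla u_0\|_{L^2(\Omega)}$ and $\|\nabla v_0\|_{L^2(\Omega)}$, whereas $\mathrm{(H2)}$ only provides $L^\infty$ data.
\end{itemize}

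Without strong compactness of $z_{\varepsilon,\xi}$, your step ``$u=\alpha(z_\xi)$ since $\alpha$ is Lipschitz'' is unsupported. So are all three strong $L^2(Q_T)$ convergences in the statement.

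The missing idea, which is the paper's argument, is that only the combination $u_n+\xi v_n$ needs to be compact. The parameter $\xi>0$ then enters through the strong monotonicity of $\alpha_\xi$, not through separate time-derivative bounds.

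\begin{enumerate}
\item Your identity $\partial_t(u_n+\xi v_n)=\Delta z_n$ bounds $\partial_t(u_n+\xi v_n)$ in $L^2(0,T;H^{-1}(\Omega))$. Together with the $L^2(0,T;H^1_0(\Omega))$ bound, Aubin--Lions gives $u_n+\xi v_n\to w_\xi$ strongly in $L^2(Q_T)$.
\item By Lemma~\ref{lem:cvg_alpha_z},
\[
(u_n+\xi v_n)-\alpha_\xi(z_n)=(1-\mu\xi)\bigl(u_n-\alpha(z_n)\bigr)\to 0 ,
\]
so $\alpha_\xi(z_n)\to w_\xi$ strongly.
\item For $\xi\le 1/\mu$ one has $(\alpha_\xi(a)-\alpha_\xi(b))(a-b)\ge \xi(a-b)^2$, so $\alpha_\xi^{-1}$ is $(1/\xi)$-Lipschitz. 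Hence $z_n\to z_\xi:=\alpha_\xi^{-1}(w_\xi)$ strongly.
\item Then $u_n\to\alpha(z_\xi)$ by Lemma~\ref{lem:cvg_alpha_z} and the Lipschitz continuity of $\alpha$, and $v_n=z_n-\mu u_n$ converges as well.
\end{enumerate}

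You noticed $\alpha_\xi'\ge\xi$ but used it only for uniqueness; it is exactly the tool needed here. With this replacement the rest of your Step~1 goes through.

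In Step~2 the time-translate argument is unnecessary. $\alpha_\xi(z_\xi)$ is bounded in $L^2(0,T;H^1_0(\Omega))$, since $\alpha$ is Lipschitz with $\alpha(0)=0$, so Aubin--Lions applies directly, as in the paper. You are right that this yields only strong convergence of $\alpha(z_\xi)$ and weak convergence of $z_\xi$. The paper's Step~2 does not go beyond this either, since it only repeats the argument of Theorem~\ref{thm:convergence_target}.

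For strictly increasing $\alpha$, as in the fast-diffusion example, strong convergence of $z_\xi$ follows from three facts: a.e.\ convergence of $\alpha(z_\xi)$, continuity of $\alpha^{-1}$, and the $L^\infty$ bound. For general $\alpha$ under $\mathrm{(H1)}$, your energy route works, but with primitives of the inverses, not of $\alpha_\xi$. Set $\Psi_\xi(r):=\int_0^r\alpha_\xi^{-1}(s)\,\mathrm{d}s$ and $\Psi(r):=\int_0^r\alpha_0^{-1}(s)\,\mathrm{d}s$. Then
\[
\Psi_\xi(\alpha_\xi(s))=\tfrac{\xi}{2}s^2+(1-\mu\xi)\Psi(\alpha(s)),
\]
so the initial energies converge. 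The terminal energies are controlled from below by strong convergence of $\alpha(z_\xi(t))$ for a.e.\ $t$ and the $L^\infty$ bound. The Alt--Luckhaus chain rule gives the energy equalities for $z_\xi$ and for $z$. Together these give $\limsup\|\nabla z_\xi\|_{L^2(Q_t)}\le\|\nabla z\|_{L^2(Q_t)}$ for a.e.\ $t$. Working on a slightly longer time interval lets you reach $t=T$, which yields strong convergence in $L^2(0,T;H^1_0(\Omega))$.
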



\section{A priori Estimates and Proof of Main Theorems}
\label{sec:proof}

In this section, we provide the proofs of the main results stated in the previous section. The proofs are based on the compactness method. Our strategy is first to establish uniform a priori estimates for the solutions of the approximate problems $(\mathrm{P}_{\varepsilon,\xi})$ and $(\mathrm{P}_{\varepsilon,0})$ that are independent of the small parameters $\varepsilon$ and $\xi$. These uniform bounds allow us to extract convergent subsequences. Subsequently, we pass to the limit in the approximate equations to identify the limit functions as the weak solutions of the target problems.

\subsection{Key structural relations and auxiliary functions}
\label{subsec:key-relations}

We begin by clarifying the algebraic structure underlying the approximation and introduce some auxiliary functions. The following heuristic arguments will guide our analysis of the asymptotic behavior as $\varepsilon \to 0$.

Let us denote by $u,v,z$ the expected limits of $u_{\varepsilon,\xi}$, $v_{\varepsilon,\xi}$ and $z_{\varepsilon,\xi}$ respectively. Recalling the linear relation $z_{\varepsilon,\xi} = \mu u_{\varepsilon,\xi} + v_{\varepsilon,\xi}$, and noticing that the reaction term penalizes the deviation of $u_{\varepsilon,\xi}$ from $\alpha(z_{\varepsilon,\xi})$, is is expected in the limit $\varepsilon \to 0$ to have
\begin{equation}
    u =  \alpha(z).
    \label{eq:approx_u_alpha}
\end{equation}
Consequently, we also expect
\begin{equation}
    z \in \alpha^{-1}(u),
    \label{eq:approx_alpha_inv}
\end{equation}
where $\alpha^{-1}$ is a multivalued function if $\alpha$ is not strictly monotone.

Furthermore, using the relation $v_{\varepsilon,\xi} = z_{\varepsilon,\xi} - \mu u_{\varepsilon,\xi}$ and \eqref{eq:approx_u_alpha}, it is also expected in the limit $\varepsilon \to 0$ to have
\begin{equation}
    v = z - \mu \alpha(z),
    \label{eq:approx_v_eta}
\end{equation}
or alternatively using \eqref{eq:approx_alpha_inv}
\begin{equation}
    v \in \alpha^{-1}(u) - \mu u.
    \label{eq:approx_v_zeta}
\end{equation}

In view of these considerations, we introduce the auxiliary functions $\eta:\mathbb{R}\to\mathbb{R}$ and $\zeta:\mathbb{R}\to 2^\mathbb{R}$ defined by
\begin{gather}
    \eta(s) := s - \mu \alpha(s), \label{eq:def_eta}\\
    \zeta(s) := \alpha^{-1}(s) - \mu s \label{eq:def_zeta}.
\end{gather}
With these notations the expected relations \eqref{eq:approx_u_alpha}-\eqref{eq:approx_v_zeta} can be compactly summarized by
\begin{equation}
\begin{gathered}
u = \alpha(z),\quad z\in \alpha^{-1}(u),\\
v = \eta(z),\quad z = \eta^{-1}(v),\\
v \in \zeta(u),\quad u = \zeta^{-1}(v).
\end{gathered}
\end{equation}

Under Assumption $\mathrm{(H3)}$, $\eta$ is a non-decreasing Lipschitz continuous function with Lipschitz constant $1$, and $\zeta$ is a maximal monotone function. Therefore, under $\mathrm{(H3)}$, $\alpha, \eta, \zeta$ are all maximal monotone functions, which will be crucial in the subsequent analysis. Also, note that since the inequality is strict in $\mathrm{(H3)}$, both $\eta^{-1}$ and $\zeta^{-1}$ are Lipschitz-continuous (hence single-valued). By definition $\alpha,\eta,\zeta$ are linked by the relationships
\begin{equation}\label{eq:relations_alpha_eta_zeta}
\begin{alignedat}{2}
    \mu\alpha(z) + \eta(z) =& z\quad&\forall z\in& \mathbb{R},\\
    \alpha(\mu u + v) =& u \quad&\forall u\in&\mathbb{R},\,\forall v\in \zeta(u),\\
    \eta(z) \in& \zeta(\alpha(z)) \quad&\forall z\in&\mathbb{R}.
\end{alignedat}
\end{equation}

Finally, note that in the singular case (i.e. $\alpha'(0)=0$), $\alpha^{-1}$ and $\zeta$ are not Lipschitz-continuous, and can even be multivalued if $\alpha$ is not strictly monotone. 
To deal with this issue, we will make use of the Yosida approximation of maximal monotone functions.

\begin{notation}
    For any real maximal monotone (possibly multivalued) function $f:\mathbb{R}\rightarrow 2^\mathbb{R}$ and any $\delta>0$, we denote by $f_\delta$ the Yosida approximant of $f$. We denote by $f_0(s)$ the element of minimum absolute value in $f(s)$. To simplify the notation, we also let $f^{-1}_\delta := (f^{-1})_\delta$ and $f^{-1}_0 := (f^{-1})_0$. Finally we define $\Phi_{f}(s) := \int_0^s f_0(r)\mathrm{d}r$ for any $s\in \mathbb{R}$.
\end{notation}

The Yosida approximant $f_\delta$ of a maximal monotone function $f:\mathbb{R}\rightarrow 2^\mathbb{R}$ is defined by $f_\delta(s) := (J_\delta(s) - s)/\delta$ with $J_\delta(s)$ being the unique solution to $(J_\delta(s) - s)/\delta \in f(J_\delta(s))$. Moreover, it is single-valued, non-decreasing, $(1/\delta)$-Lipschitz continuous, and for any $s\in\mathbb{R}$ it holds $|f_\delta(s)|\leq \min |f(s)|$ and $f_\delta(s)\rightarrow f_0(s)$ as $\delta\rightarrow 0$ \cite[Proposition 2.2]{barbu}. Moreover, we also note that $\Phi_f$ is a convex function, and if $0\in f(0)$ then $\Phi_f$ is also non-negative and satisfies $\Phi_f(s)\leq s f_0(s)$.

\subsection{Unique existence of solutions to the approximate problems}
\label{subsec:existence}

Before proceeding to the a priori estimates, we establish existence, uniqueness, and regularity of solutions to the approximate problems $\mathrm{(P_{\varepsilon,\xi})}$ and $\mathrm{(P_{\varepsilon,0})}$. 
We have the following result.

\begin{proposition}
    Let $\varepsilon > 0$ and $\mu>0$ be fixed and suppose that $\mathrm{(H1)}$ and $\mathrm{(H2)}$ are satisfied. Then the following hold.
    
    {Case $\xi > 0$:} Problem $\mathrm{(P_{\varepsilon,\xi})}$ admits a unique strong solution $(u_{\varepsilon,\xi}, v_{\varepsilon,\xi})$ satisfying
    $$
    u_{\varepsilon,\xi}, v_{\varepsilon,\xi} \in C([0,\infty); L^2(\Omega)) \cap C^1((0,\infty); L^2(\Omega)) \cap C((0,\infty); H^1_0(\Omega) \cap H^2(\Omega)).
    $$
    
    {Case $\xi = 0$:} Problem $\mathrm{(P_{\varepsilon,0})}$ admits a unique strong solution $(u_{\varepsilon,0}, v_{\varepsilon,0})$ satisfying
    \begin{align*}
    u_{\varepsilon,0} &\in C([0,\infty); L^2(\Omega)) \cap C^1((0,\infty); L^2(\Omega)) \cap C((0,\infty); H^1_0(\Omega) \cap H^2(\Omega)), \\
    v_{\varepsilon,0} &\in C([0,\infty);  H^1_0(\Omega) \cap H^2(\Omega)). 
    \end{align*}
\end{proposition}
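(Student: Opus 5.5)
Throughout, we use the reduction made after Assumption \ref{ass:A}: $\alpha$ is globally Lipschitz with constant $L_\alpha$. Under this reduction, the reaction term $F(u,v) := u - \alpha(\mu u + v)$ is globally Lipschitz on $\mathbb{R}^2$. The plan is to treat both cases as semilinear evolution equations in $X := L^2(\Omega)\times L^2(\Omega)$, governed by an analytic semigroup and perturbed by a globally Lipschitz nonlinearity. The case $\xi=0$ is first reduced to a single equation for $u$ by solving the elliptic equation for $v$.

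\textbf{Case $\xi>0$.} Write the system as $\frac{d}{dt}(u,v) = A(u,v) + G(u,v)$ with
\[
A = \mathrm{diag}\bigl(\mu\Delta,\ \xi^{-1}\Delta\bigr),\qquad G(u,v) = \Bigl(-\tfrac{\mu}{\varepsilon}F(u,v),\ \tfrac{\mu}{\xi\varepsilon}F(u,v)\Bigr).
\]
Here $A$ carries homogeneous Dirichlet conditions and has domain $(H^1_0\cap H^2)^2$.

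\begin{enumerate}[label=\arabic*.]
\item $A$ is self-adjoint and negative, so it generates an analytic contraction semigroup on $X$.
\item $G:X\to X$ is globally Lipschitz. A Banach fixed point argument on the Duhamel formula in $C([0,T];X)$, with a weighted norm $e^{-\lambda t}$ so that no smallness in $T$ is needed, gives a unique global mild solution for every $T$. Hence it exists on $[0,\infty)$.
\item To upgrade to a strong solution, I would not use Hölder continuity of $G$ in time directly. Instead, I would use the standard regularity result for analytic semigroups with Lipschitz $G$ (Pazy, Thm.~6.3.1 type). Since $G(w(\cdot))$ is Lipschitz in time on $[\delta,T]$, because $w$ is Lipschitz there by the analytic smoothing estimates, the solution belongs to $C^1((0,\infty);X)\cap C((0,\infty);D(A))$. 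This is exactly the regularity claimed.
\item Uniqueness follows from Gronwall applied to the difference of two mild solutions.
\end{enumerate}

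\textbf{Case $\xi=0$.} For fixed $u\in L^2(\Omega)$, consider the elliptic problem
\[
-\Delta v + \tfrac{\mu}{\varepsilon}\alpha(\mu u + v) = \tfrac{\mu}{\varepsilon}u,\qquad v\in H^1_0(\Omega).
\]
Since $v\mapsto \alpha(\mu u+v)$ is monotone, the operator on the left is strongly monotone, Lipschitz and coercive from $H^1_0$ to $H^{-1}$. Minty--Browder therefore gives a unique solution $v = S(u)$.

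Next I would prove that $S:L^2\to H^1_0$ is Lipschitz.
\begin{enumerate}[label=\arabic*.]
\item Subtract the equations for $u_1,u_2$ and test with $v_1-v_2$. Monotonicity of $\alpha$ handles the nonlinear term.
\item Writing $\alpha(\mu u_1+v_1)-\alpha(\mu u_2+v_2)$ as a difference in its argument, one obtains
\[
\|\nabla(v_1-v_2)\|^2 \le C\|u_1-u_2\|\,\|v_1-v_2\|.
\]
Poincaré's inequality then yields $\|v_1-v_2\|_{H^1}\le C\|u_1-u_2\|$.
\item Elliptic regularity gives $S(u)\in H^2$, with $\|S(u)\|_{H^2}\le C(1+\|u\|)$.
\end{enumerate}

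The $u$-equation then becomes
\[
\tfrac{1}{\mu}u_t = \Delta u - \tfrac{1}{\varepsilon}\bigl(u-\alpha(\mu u+S(u))\bigr).
\]
This has a globally Lipschitz nonlinearity on $L^2$, so the $\xi>0$ argument applies verbatim to give $u_{\varepsilon,0}$ with the stated regularity. Finally, $v_{\varepsilon,0}=S(u_{\varepsilon,0})$. Because $S$ is Lipschitz into $H^1_0$ and affinely bounded into $H^2$, we get $v\in C([0,\infty);H^1_0)$ directly from $u\in C([0,\infty);L^2)$. For continuity into $H^2$, rewrite the equation as $-\Delta v = \frac{\mu}{\varepsilon}F(u,v)$, whose right-hand side is continuous in $L^2$; elliptic regularity then transfers this continuity. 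This includes $t=0$, where $u_0\in L^\infty\subset L^2$.

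\textbf{Main obstacle.} The one real subtlety is checking that the initial datum is consistent when $\xi=0$. The solution's $v(0)$ is determined as $S(u_0)$, and it need not coincide with the prescribed $v_0 = z_0-\mu\alpha(z_0)$, which is not even in $H^1_0$ in general. I would resolve this by interpreting the initial condition for $\mathrm{(P_{\varepsilon,0})}$ as prescribing only $u(0)=u_0$, with $v(0)=S(u_0)$ determined by the constraint. That is the natural reading, consistent with the stated regularity $v_{\varepsilon,0}\in C([0,\infty);H^1_0\cap H^2)$. Apart from this, the only step needing a precise citation is the regularity upgrade for Lipschitz perturbations of analytic semigroups.
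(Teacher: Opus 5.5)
Your proposal is correct and follows essentially the paper's proof. For $\xi>0$ both rely on standard semilinear theory for the self-adjoint Dirichlet Laplacian perturbed by a globally Lipschitz reaction term. For $\xi=0$ both solve the elliptic equation by Minty--Browder, show (by testing the difference with $v_1-v_2$ and using elliptic regularity) that the solution map $S$ is Lipschitz from $L^2(\Omega)$ into $H^1_0(\Omega)\cap H^2(\Omega)$, and apply the same theory to the closed equation for $u$ with $v_{\varepsilon,0}=S(u_{\varepsilon,0})$.

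Your remark that $v_{\varepsilon,0}(0)=S(u_0)$ rather than $v_0$ is a fair point that the paper leaves implicit; its later estimates for $\xi=0$ indeed never use $v_0$. One small inaccuracy: the analytic smoothing estimates by themselves give the mild solution only an $h\log(1/h)$ (hence H\"older, not Lipschitz) modulus of continuity on $[\delta,T]$, but that is all Pazy's Theorem~6.3.1 needs.
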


\begin{proof}
In the case $\xi>0$, the existence and uniqueness of a strong solution to $(\mathrm{P}_{\varepsilon,\xi})$, with regularity $u_{\varepsilon,\xi},v_{\varepsilon,\xi}\in C([0,\infty), L^2(\Omega))\cap C^1((0,\infty), L^2(\Omega)) \cap C((0,\infty), H^1_0(\Omega)\cap H^2(\Omega))$ can be established using standard techniques for monotone types semi-linear evolution equations, see e.g.\@~\cite{CH}, since the reaction terms are globally Lipschitz continuous.

In the case $\xi=0$, the existence of a strong solution can be proved similarly by first writing $v_{\varepsilon, 0}$ as a function of $u_{\varepsilon, 0}$. Indeed, given $u\in L^2(\Omega)$, the equation
\begin{equation}\label{eq:v_elliptic}
- \Delta v = \frac{\mu}{\varepsilon}(u - \alpha(\mu u + v)) \quad \text{in } \Omega, \quad v=0 \text{ on } \partial\Omega
\end{equation}
admits a unique weak solution $v\in H^1_0(\Omega)$ by standard variational methods, or Minty-Browder theory, thanks to the fact that $\alpha$ is non-decreasing. Let $u_1, u_2\in L^2(\Omega)$ and $v_1, v_2\in H^1_0(\Omega)$ be the corresponding weak solutions. Taking the difference of the equations satisfied by $v_1$ and $v_2$, and testing with $v_1 - v_2$ yields
$$
\begin{aligned}
\|v_1 - v_2\|_{H^1_0(\Omega)}^2 
=& \frac{\mu}{\varepsilon}\int_{\Omega} (u_1 - u_2) (v_1-v_2) - \frac{\mu}{\varepsilon}\int_\Omega (\alpha(\mu u_1 + v_1)-\alpha(\mu u_2 + v_2)) (v_1 - v_2),\\
=& \frac{\mu}{\varepsilon}\int_{\Omega} (u_1 - u_2) (v_1-v_2)\\ 
&- \frac{\mu}{\varepsilon}\int_\Omega (\alpha(\mu u_1 + v_1)-\alpha(\mu u_2 + v_2)) (\mu u_1 + v_1 - \mu u_2 - v_2) \\
&+ \frac{\mu^2}{\varepsilon}\int_\Omega (\alpha(\mu u_1 + v_1)-\alpha(\mu u_2 + v_2))(u_1 - u_2).
\end{aligned}
$$
The second term in the right-hand side is non-positive thanks to the monotonicity of $\alpha$. Furthermore, using the Lipschitz continuity of $\alpha$ in the third term and H\"older's inequality, we obtain
$$
\|v_1 - v_2\|_{H^1_0(\Omega)}^2 \leq C (\|u_1 - u_2\|_{L^2(\Omega)} \|v_1 - v_2\|_{L^2(\Omega)} + \|u_1 - u_2\|_{L^2(\Omega)}^2).
$$
Therefore, the Poincar\'e and Young inequalities yield
$$
\|v_1 - v_2\|_{L^2(\Omega)} \leq C \|u_1 - u_2\|_{L^2(\Omega)},
$$
up to relabeling the constant $C$. In particular, using again the Lipschitz continuity of $\alpha$, this shows that the $L^2(\Omega)$ norm of the right-hand side of \eqref{eq:v_elliptic} is controlled by $\|u\|_{L^2(\Omega)}$. Hence, from the regularity theory for elliptic equations, we deduce
$$
\|v_1 - v_2\|_{H^2(\Omega)} \leq C \|u_1 - u_2\|_{L^2(\Omega)}.
$$
up to relabeling the constant $C$. This shows that the operator $S: L^2(\Omega)\rightarrow H^2(\Omega)\cap H^1_0(\Omega)$ defined by $S(u) = v$, where $v$ is the unique weak solution to \eqref{eq:v_elliptic}, is Lipschitz continuous from $L^2(\Omega)$ to $H^2(\Omega)$.

To obtain existence of a strong solution to problem $(\mathrm{P}_{\varepsilon,0})$, we write equation \eqref{eq:u} with $\xi=0$ as
\begin{equation}\label{eq:u_closed_form}
\frac{1}{\mu}\frac{\partial}{\partial t} u_{\varepsilon,0} -\Delta u_{\varepsilon,0} = F(u_{\varepsilon,0}), \quad t\in (0,T), x\in \Omega,
\end{equation}
where
$$
F(u):=- \frac{1}{\varepsilon}(u - \alpha(\mu u + S(u))),\quad u\in L^2(\Omega).
$$
Due to the Lipschitz continuity of $\alpha$ and $S$, $F$ is Lipschitz continuous from $L^2(\Omega)$ to itself, and also from $H^1_0(\Omega)$ to itself. Therefore, by standard theory for monotone type semi-linear evolution equations (see e.g.\@~\cite[Prop. 4.3.4, Prop. 5.1.1]{CH}), there exists a unique strong solution $u_{\varepsilon,0}\in C([0,\infty), L^2(\Omega))\cap C^1((0,\infty), L^2(\Omega)) \cap C((0,\infty), H^1_0(\Omega)\cap H^2(\Omega))$ to \eqref{eq:u_closed_form} for any initial data $u_0\in L^2(\Omega)$. The corresponding $v_{\varepsilon,0}$ can then be obtained by $v_{\varepsilon,0} = S(u_{\varepsilon,0})$, and it satisfies $v_{\varepsilon,0}\in C([0,\infty), H^2(\Omega)\cap H^1_0(\Omega))$, thanks to the continuity of $S: L^2(\Omega)\rightarrow H^2(\Omega)\cap H^1_0(\Omega)$.
\end{proof}

\subsection{Uniqueness of the weak solution of $\mathrm{(P)}$}
Before deriving the uniform a priori estimates and proving the convergence results, we establish the uniqueness of the weak solution to the target problem $\mathrm{(P)}$, which will later ensure that the entire sequence of approximate solutions converges to a single limit.

\begin{proposition}
\label{prop:uniqueness}
    Suppose that Assumption $\mathrm{(H1)}$ holds. Then, the weak solution of Problem $\mathrm{(P)}$ with an initial datum $z_0\in L^2(\Omega)$ is unique.
\end{proposition}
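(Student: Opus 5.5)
We follow the classical $H^{-1}$ duality argument (Oleinik/Brezis type) for equations of the form $\partial_t \alpha(z) = \Delta z$. Let $z_1,z_2$ be two weak solutions in the sense of Definition~\ref{def:weak_solution} with the same initial datum $z_0$. Set $w := \alpha(z_1)-\alpha(z_2)\in L^\infty(0,T;L^2(\Omega))$ and $e:=z_1-z_2\in L^2(0,T;H^1_0(\Omega))$. Subtracting the two weak formulations gives
\[
-\int_0^T \la \partial_t\varphi, w\ra\,\mathrm{d}t + \int_0^T \la \nabla e,\nabla\varphi\ra\,\mathrm{d}t = 0
\]
for all admissible $\varphi$ with $\varphi(T)=0$. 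In particular, $\partial_t w = \Delta e$ in $L^2(0,T;H^{-1}(\Omega))$ with $w(0)=0$.

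We use Ladyzhenskaya's test function. Fix $s\in(0,T]$ and define
\[
\varphi(t) := \int_t^s e(\tau)\,\mathrm{d}\tau \quad\text{for } t<s, \qquad \varphi(t):=0 \quad\text{for } t\ge s.
\]
Then $\varphi\in H^1(0,T;L^2(\Omega))\cap L^2(0,T;H^1_0(\Omega))$, $\varphi(T)=0$, and $\partial_t\varphi = -e\,\chi_{(0,s)}$, so $\varphi$ is admissible. Substituting it into the identity above yields
\[
\int_0^s \la e, w\ra\,\mathrm{d}t + \int_0^s \la \nabla\varphi, \nabla e\ra\,\mathrm{d}t = 0 .
\]
Since $\nabla e = -\partial_t\nabla\varphi$ on $(0,s)$, the second term equals $-\tfrac12\int_0^s \tfrac{\mathrm{d}}{\mathrm{d}t}\|\nabla\varphi\|^2\,\mathrm{d}t = \tfrac12\|\nabla\varphi(0)\|^2$, using $\varphi(s)=0$. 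Hence
\[
\int_0^s\int_\Omega (z_1-z_2)(\alpha(z_1)-\alpha(z_2))\,\mathrm{d}x\,\mathrm{d}t + \tfrac12\Big\|\int_0^s\nabla e\,\mathrm{d}\tau\Big\|^2_{L^2(\Omega)} = 0 .
\]
Both terms are nonnegative, the first by the monotonicity of $\alpha$ in (H1). So $(z_1-z_2)(\alpha(z_1)-\alpha(z_2))=0$ a.e., and also $\int_0^s e\,\mathrm{d}\tau = 0$ in $H^1_0$ for every $s$. Differentiating the latter in $s$ gives $e=0$ a.e., so $z_1=z_2$. The differentiation is legitimate because $s\mapsto\int_0^s e$ is absolutely continuous with values in $H^1_0$.

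The main technical point is justifying the integration by parts in time. The test function is only $H^1$ in time with values in $L^2$, and the identity $\int_0^s\la\nabla\varphi,\partial_t\nabla\varphi\ra = -\tfrac12\|\nabla\varphi(0)\|^2$ requires $\varphi\in H^1(0,s;H^1_0(\Omega))$. This holds, since $\partial_t\varphi=-e\in L^2(0,T;H^1_0)$. We also need $\varphi\in H^1(0,T;L^2)$ globally, including across $t=s$; this holds because $\varphi$ is continuous at $s$ with $\varphi(s)=0$. Only monotonicity of $\alpha$ is used, not Lipschitz continuity, consistent with assuming only (H1). Note also that no regularity of $\partial_t\alpha(z)$ is needed: the weak formulation places all time derivatives on $\varphi$.
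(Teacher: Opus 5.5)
Your proposal is correct and follows essentially the same route as the paper. Both use the Ladyzhenskaya test function $\varphi(t)=\int_t^{t_0}(z_1-z_2)\,\mathrm{d}s$ to obtain the identity combining the monotonicity term with $\tfrac12\|\nabla\varphi(0)\|_{L^2(\Omega)}^2$, and both conclude by differentiating in the upper limit $t_0$.
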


\begin{proof}
Suppose that $z_1$ and $z_2$ are two weak solutions of $\mathrm{(P)}$ with the same initial datum $z_0$. 
By Definition~\ref{def:weak_solution}, we have $z_1 - z_2 \in L^2(0, T; H^1_0(\Omega))$ and $\alpha(z_1) - \alpha(z_2) \in L^\infty(0, T; L^2(\Omega))$. 
Taking the difference of the weak formulations for $z_1$ and $z_2$, we obtain
$$ \int_0^T \la \frac{\partial \varphi}{\partial t}, \alpha(z_1) - \alpha(z_2) \ra - \int_0^T \la \nabla (z_1 - z_2), \nabla \varphi \ra = 0 $$
for all test functions $\varphi \in H^1(0, T; L^2(\Omega)) \cap L^2(0, T; H^1_0(\Omega))$ satisfying $\varphi(T) = 0$.

For an arbitrary fixed $t_0 \in (0, T]$, we define a test function $\varphi$ by
$$ \varphi(t, x) := \begin{cases} \int_t^{t_0} (z_1(s, x) - z_2(s, x)) \mathrm{d}s & \text{if } 0 \le t \le t_0, \\ 0 & \text{if } t_0 < t \le T. \end{cases} $$
Since $z_1 - z_2 \in L^2(0, T; H^1_0(\Omega))$, it follows that $\varphi \in C([0, T]; H^1_0(\Omega))$ and $\frac{\partial \varphi}{\partial t} = -(z_1 - z_2)$ on $(0, t_0)$ and $0$ on $(t_0, T)$. 
Clearly, $\varphi(T) = 0$, so $\varphi$ is an admissible test function.

Substituting this $\varphi$ into the difference equation yields
$$ - \int_0^{t_0} \la z_1 - z_2, \alpha(z_1) - \alpha(z_2) \ra - \int_0^{t_0} \la \nabla (z_1 - z_2), \nabla \varphi \ra = 0. $$
Using the relation $\nabla (z_1 - z_2) = -\nabla \left(\frac{\partial \varphi}{\partial t}\right)$ on $(0, t_0)$, the second term can be rewritten as
$$ - \int_0^{t_0} \la \nabla (z_1 - z_2), \nabla \varphi \ra = \int_0^{t_0} \la \frac{\partial}{\partial t} \nabla \varphi, \nabla \varphi \ra = \frac{1}{2} \int_0^{t_0} \frac{\mathrm{d}}{\mathrm{d}t} \|\nabla \varphi(t)\|_{L^2(\Omega)}^2 \mathrm{d}t. $$
Evaluating this integral, we get
$$ \frac{1}{2} \|\nabla \varphi(t_0)\|_{L^2(\Omega)}^2 - \frac{1}{2} \|\nabla \varphi(0)\|_{L^2(\Omega)}^2 = - \frac{1}{2} \|\nabla \varphi(0)\|_{L^2(\Omega)}^2, $$
since $\varphi(t_0) = 0$. 

Therefore, the equation becomes
$$ \int_0^{t_0} \int_\Omega (z_1 - z_2)(\alpha(z_1) - \alpha(z_2)) + \frac{1}{2} \left\| \int_0^{t_0} \nabla (z_1(s) - z_2(s)) \mathrm{d}s \right\|_{L^2(\Omega)}^2 = 0. $$
Assumption $\mathrm{(H1)}$ implies that the first term is non-negative. 
Since the second term is also non-negative, both terms must identically be zero. 
In particular, the second term yields
$$ \int_0^{t_0} \nabla (z_1(s) - z_2(s)) \mathrm{d}s = 0 \quad \text{a.e. in } \Omega $$
for any $t_0 \in (0, T]$. Differentiating this with respect to $t_0$, we obtain $\nabla(z_1 - z_2) = 0$ a.e.\ in $Q_T$. 
Since $z_1 - z_2 \in H^1_0(\Omega)$, the Poincar\'e inequality gives $z_1 = z_2$ a.e.\ in $Q_T$. This completes the proof of uniqueness.
\end{proof}

\subsection{Uniform estimates}
\label{subsec:estimates}

We establish a priori estimates for the parabolic-parabolic system $(\mathrm{P}_{\varepsilon,\xi})$ with respect to $\varepsilon$ and $\xi$. In what follows, $C$ denotes a generic positive constant that depends on the domain $\Omega$, the final time $T$, and the initial data, but is independent of the parameters $\varepsilon$ and $\xi$. 
Let $(u_{\varepsilon,\xi}, v_{\varepsilon,\xi})$ be the strong solution to $(\mathrm{P}_{\varepsilon,\xi})$. We begin by deriving the following uniform $L^p$ estimates.

\begin{lemma} 
\label{lem:L2H1_Linf}
    For any $p \ge 2$, let $p'=p/(p-1)$ be the conjugate exponent of $p$. Denote $Q_t := \Omega\times (0,t)$, and  $s^{p-1} := |s|^{p-2}s$ for any $s\in\mathbb{R}$. Assume $\mathrm{(H1)}$--$\mathrm{(H3)}$ and $\xi\geq 0$. Then for any $t\in(0,\infty)$ and $p\ge 2$, the following inequalities hold
    \begin{equation}\label{eq:energy_ineq_u_Lp}
    \frac{1}{p}\|u_{\varepsilon,\xi}(t)\|_{L^p(\Omega)}^p + \frac{4\mu}{pp'}\|\nabla u_{\varepsilon,\xi}^{p/2}\|_{L^2(Q_t)}^2 
    \leq \frac{1}{p}\|u_0\|_{L^p(\Omega)}^p + \xi \int_\Omega v_0 \zeta^{-1}(v_0)^{p-1}
    \end{equation}
    and
    \begin{equation}\label{eq:energy_ineq_v_Lp}
    \frac{\xi}{p}\|v_{\varepsilon,\xi}(t)\|_{L^p(\Omega)}^p + \frac{4}{pp'}\|\nabla v_{\varepsilon,\xi}^{p/2}\|_{L^2(Q_t)}^2\\
    \leq \int_\Omega u_0\zeta_0(u_0)^{p-1} + \frac{\xi}{p}\|v_0\|_{L^p(\Omega)}^p.
    \end{equation}
    In particular, if $\xi>0$, then it holds
    \begin{gather}
    \|u_{\varepsilon,\xi}\|_{L^\infty(\Omega\times (0,\infty))} \leq \|u_0\|_{L^\infty(\Omega)} + \|\zeta^{-1}(v_0)\|_{L^\infty(\Omega)}, \label{eq:u_L_inf}\\
    \|v_{\varepsilon,\xi}\|_{L^\infty(\Omega\times (0,\infty))} \leq \|\zeta_{0}(u_0)\|_{L^\infty(\Omega)} + \|v_0\|_{L^\infty(\Omega)}\label{eq:v_L_inf}.
    \end{gather}
    If $\xi=0$, then it holds
    \begin{gather}
    \|u_{\varepsilon,0}\|_{L^\infty(\Omega\times (0,\infty))} \leq \|u_0\|_{L^\infty(\Omega)}, \label{eq:u_L_inf_xi_0}\\
    \|v_{\varepsilon,0}\|_{L^\infty(\Omega\times (0,\infty))} \leq \|\alpha_0^{-1}(u_0)\|_{L^\infty(\Omega)} + \mu\|u_0\|_{L^\infty(\Omega)}. \label{eq:v_L_inf_xi_0}
    \end{gather}
\end{lemma}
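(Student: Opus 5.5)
The plan is to obtain both $L^p$ inequalities by testing the two equations of $(\mathrm{P}_{\varepsilon,\xi})$ with carefully paired functions. The pairing is chosen so that the reaction contributions cancel up to a sign-definite remainder. We then let $p\to\infty$ to get the $L^\infty$ bounds.

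Write $R:=\varepsilon^{-1}(u-\alpha(\mu u+v))$ with $u=u_{\varepsilon,\xi}$, $v=v_{\varepsilon,\xi}$. Since $\alpha$ is non-decreasing with $\mu L_\alpha<1$, the map $u\mapsto R$ is strictly increasing and $v\mapsto R$ is non-increasing. Moreover $R(u,v)=0$ exactly when $u=\zeta^{-1}(v)$, equivalently $v\in\zeta(u)$, by \eqref{eq:relations_alpha_eta_zeta}. Hence
$$R\,\bigl(u^{p-1}-\zeta^{-1}(v)^{p-1}\bigr)\ge 0,$$
because $u-\zeta^{-1}(v)$ and $R$ have the same sign and $s\mapsto s^{p-1}$ is increasing.

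\emph{Proof of \eqref{eq:energy_ineq_u_Lp}.} We multiply \eqref{eq:u} by $\mu u^{p-1}$ and \eqref{eq:v} by $\zeta^{-1}(v)^{p-1}$, then add and integrate over $\Omega$. This is legitimate for the strong solution, since $\zeta^{-1}$ is Lipschitz.

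The diffusion term of $u$ gives $\frac{4\mu(p-1)}{p^2}\|\nabla u^{p/2}\|^2=\frac{4\mu}{pp'}\|\nabla u^{p/2}\|^2$. The diffusion term of $v$ gives $-\int\nabla v\cdot\nabla(\zeta^{-1}(v)^{p-1})\le0$, which we discard. The reactions combine to $-\mu\int R\,(u^{p-1}-\zeta^{-1}(v)^{p-1})\le 0$.

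For the time derivative of $v$, set $\Psi(s)=\int_0^s\zeta^{-1}(r)^{p-1}\,dr$. Then $\xi\int v_t\,\zeta^{-1}(v)^{p-1}=\xi\frac{d}{dt}\int\Psi(v)$, with $0\le\Psi(s)\le s\,\zeta^{-1}(s)^{p-1}$ by convexity and $\zeta^{-1}(0)=0$.

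Integrating in time and dropping $\xi\int\Psi(v(t))\ge0$ gives \eqref{eq:energy_ineq_u_Lp}. When $\xi=0$ the $\Psi$ term is absent altogether.

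\emph{Proof of \eqref{eq:energy_ineq_v_Lp}.} Symmetrically, we test \eqref{eq:v} with $v^{p-1}$ and \eqref{eq:u} with $\mu w^{p-1}$. Since $\zeta$ is multivalued and non-Lipschitz, we take $w=\zeta_\delta(u)$, the Yosida approximant.

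The time term becomes $\frac{d}{dt}\int\Phi_\delta(u)$, where $\Phi_\delta'=\zeta_\delta^{p-1}$. Its initial value is bounded by $u_0\zeta_\delta(u_0)^{p-1}\le u_0\zeta_0(u_0)^{p-1}$, using $|\zeta_\delta|\le|\zeta_0|$. The gradient term of $u$ has a good sign.

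The reactions give $\mu\int R(u,v)(v^{p-1}-w^{p-1})$, and this is the \textbf{main obstacle}: $w$ lies in $\zeta(J_\delta u)$, not in $\zeta(u)$. We split
$$R(u,v)=\bigl[R(u,v)-R(u,w)\bigr]+R(u,w).$$
The bracket paired with $v^{p-1}-w^{p-1}$ is $\le0$ by monotonicity in $v$. For the remainder, $R(J_\delta u,w)=0$ and $R$ is $\varepsilon^{-1}$-Lipschitz in $u$, so
$$|R(u,w)|\le \varepsilon^{-1}|u-J_\delta u|=\varepsilon^{-1}\delta|\zeta_\delta(u)|\le\varepsilon^{-1}\delta|\zeta_0(u)|.$$
For fixed $\varepsilon$, we would first show that $\zeta_0(u)$ and $v$ are bounded in $L^p(Q_t)$. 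For this we use the boundedness of the solution at fixed $\varepsilon$, coming from the global Lipschitz reaction and $L^\infty$ data. Then we let $\delta\to0$; lower semicontinuity handles $\nabla\zeta_\delta(u)^{p/2}$, which we had dropped anyway. The outcome is \eqref{eq:energy_ineq_v_Lp}. If the fixed-$\varepsilon$ boundedness turns out to be delicate, the fallback is to regularize $\alpha$ to be strictly increasing with $\alpha^{-1}$ Lipschitz, and pass to the limit at the end.

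\emph{$L^\infty$ bounds.} For $\xi>0$, we take $p$-th roots in \eqref{eq:energy_ineq_u_Lp}. Using
$$\xi\int v_0\zeta^{-1}(v_0)^{p-1}\le \xi|\Omega|\,\|v_0\|_\infty\|\zeta^{-1}(v_0)\|_\infty^{p-1}$$
and $(A^p+B^p)^{1/p}\le A+B$, and letting $p\to\infty$, we obtain \eqref{eq:u_L_inf}. The constants $p^{1/p}$, $\xi^{1/p}$ and $|\Omega|^{1/p}$ all tend to $1$. The bound \eqref{eq:v_L_inf} follows in the same way from \eqref{eq:energy_ineq_v_Lp}.

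For $\xi=0$, \eqref{eq:energy_ineq_u_Lp} has no extra term, which gives \eqref{eq:u_L_inf_xi_0}. For $v_{\varepsilon,0}$ we use a weak maximum principle on the elliptic equation instead, testing \eqref{eq:v} with $(v-k)_+$ where $k:=\|\alpha_0^{-1}(u_0)\|_\infty+\mu\|u_0\|_\infty$. On the set $\{v>k\}$ we have $v>\sup\zeta(u)$ because $|u|\le\|u_0\|_\infty$, so $R<0$ there. Hence
$$\|\nabla(v-k)_+\|^2=\tfrac{\mu}{\varepsilon}\int R\,(v-k)_+\le0,$$
so $v\le k$. The lower bound $v\ge -k$ follows in the same way, which is \eqref{eq:v_L_inf_xi_0}.
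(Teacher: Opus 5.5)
Your proposal is correct and follows the paper's proof essentially step for step: the same test-function pairings ($\mu u^{p-1}$ with $\zeta^{-1}(v)^{p-1}$, and $\mu\zeta_\delta(u)^{p-1}$ with $v^{p-1}$), the same $p\to\infty$ passage, and the same elliptic maximum principle for $\xi=0$. There are two small differences. In the Yosida step, you split $R$ and bound the remainder by $O(\delta)$ using $R(J_\delta u,\zeta_\delta(u))=0$, whereas the paper simply lets $\delta\to0$ in $I_\delta$ by dominated convergence and reads off the sign from $\zeta_0(u)\in\zeta(u)$. In the maximum principle, you only need $R\le0$ on $\{v>k\}$, i.e.\ $\mu u+v\ge\min\alpha^{-1}(u)$; your claim $v>\sup\zeta(u)$ is stronger than necessary and can fail when $\alpha$ has flat parts.
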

\begin{proof} 
To obtain \eqref{eq:energy_ineq_u_Lp}, we test \eqref{eq:u} by $\mu u_{\varepsilon,\xi}^{p-1}(t)$ and \eqref{eq:v} by $\phi(v_{\varepsilon,\xi}(t))$ where $\phi:=(\zeta^{-1}(\cdot))^{p-1}$, and then sum. This yields for any $s\in(0,\infty)$,
\begin{multline}\label{eq:derivation_enery_ineq}
\frac{1}{p}\frac{\mathrm{d}}{\mathrm{d}s} \|u_{\varepsilon,\xi}(s)\|_{L^p(\Omega)}^p + \frac{4\mu}{pp'} \int_\Omega |\nabla u_{\varepsilon,\xi}^{p/2}(s)|^2 + \xi \frac{\mathrm{d}}{\mathrm{d}s}\int_\Omega\Phi_{\phi}(v_{\varepsilon,\xi}(s))\\
+ \int_\Omega\phi'(v_{\varepsilon,\xi}(s))|\nabla v_{\varepsilon,\xi}(s)|^2 + \frac{\mu}{\varepsilon}\left\langle u_{\varepsilon,\xi}(s) - \alpha(z_{\varepsilon,\xi}(s)), u_{\varepsilon,\xi}^{p-1}(s) - \phi(v_{\varepsilon,\xi}(s))\right\rangle = 0.
\end{multline}
Since $\phi$ is non-decreasing and satisfies $\phi(0) = 0$, it holds $\Phi_\phi(s)\geq 0$ for any $s\in\mathbb{R}$, and 
$$
\Phi_{\phi}(s) = \int_0^s (\zeta^{-1}(r))^{p-1}\mathrm{d}r \leq s(\zeta^{-1}(s))^{p-1},\quad s\in\mathbb{R}.
$$
Therefore, integrating \eqref{eq:derivation_enery_ineq} between $0$ and $t$ yields
$$
\frac{1}{p}\|u_{\varepsilon,\xi}(t)\|_{L^p(\Omega)}^p + \frac{4\mu}{pp'}\|\nabla u_{\varepsilon,\xi}^{p/2}\|_{L^2(Q_t)}^2 + I
\leq \frac{1}{p}\|u_0\|_{L^p(\Omega)}^p + \xi \int_\Omega v_0 \zeta^{-1}(v_0)^{p-1},
$$
where
\begin{equation}\label{eq:def_I}
I = \frac{\mu}{\varepsilon}\int_0^t  \langle u_{\varepsilon,\xi}(s) - \alpha(z_{\varepsilon,\xi}(s)), u_{\varepsilon,\xi}^{p-1}(s) - \phi(v_{\varepsilon,\xi}(s))\rangle \mathrm{d}s.
\end{equation}
The integrand in the \eqref{eq:def_I} can be written for any $w\in \zeta(u_{\varepsilon,\xi}(s))$ as
$$
\langle \alpha(\mu u_{\varepsilon,\xi}(s) + w)  - \alpha(\mu u_{\varepsilon,\xi}(s) + v_{\varepsilon,\xi}(s)), \phi(w)- \phi(v_{\varepsilon,\xi}(s))\rangle,
$$
which is non-negative by the monotonicity of $\alpha$ and $\phi$.

This concludes the proof of \eqref{eq:energy_ineq_u_Lp}.
Moreover, Hölder inequality \eqref{eq:energy_ineq_u_Lp} yields
\begin{gather}
\|u_{\varepsilon,\xi}(t)\|_{L^p(\Omega)} \leq \|u_0\|_{L^p(\Omega)} + p^{1/p}\xi^{1/p}\|v_0\|_{L^p(\Omega)}^{1/p}\|\zeta^{-1}(v_0)\|_{L^p(\Omega)}^{(p-1)/p}.
\end{gather}
By letting $p\rightarrow \infty$, we obtain \eqref{eq:u_L_inf} in the case $\xi>0$, and \eqref{eq:u_L_inf_xi_0} in the case $\xi=0$. 

To prove \eqref{eq:energy_ineq_v_Lp}, we let $\delta>0$ and test \eqref{eq:u} by $\mu \psi^\delta(u_{\varepsilon,\xi}(t))$ where $\psi^\delta := (\zeta_{\delta}(\cdot))^{p-1}$, and \eqref{eq:v} by $v_{\varepsilon,\xi}^{p-1}(t)$. Summing and integrating between $0$ and $t$ we obtain as before
\begin{multline}
\int_\Omega\Phi_{\psi^\delta}(u_{\varepsilon,\xi}(t)) + \frac{\xi}{p}\|v_{\varepsilon,\xi}(t)\|_{L^p(\Omega)}^p + \frac{4}{pp'}\|\nabla v_{\varepsilon,\xi}^{p/2}\|_{L^2(Q_t)}^2 + I_\delta 
\leq \int_\Omega\Phi_{\psi^\delta}(u_0) + \frac{\xi}{p}\|v_0\|_{L^p(\Omega)}^p,
\end{multline} 
where
\begin{equation}
    I_\delta = \frac{\mu}{\varepsilon}\int_0^t \langle u_{\varepsilon,\xi}(s) - \alpha(z_{\varepsilon,\xi}(s)), \psi^\delta(u_{\varepsilon,\xi}(s)) - v_{\varepsilon,\xi}^{p-1}(s)\rangle \mathrm{d}s. \label{eq:Idelta}
\end{equation}
As above, $\Phi_{\psi^\delta}(s)\geq 0$ for any $s\in\mathbb{R}$, and
$$\int_\Omega\Phi_{\psi^\delta}(u_0)\leq \int_\Omega \Phi_{\psi^0}(u_0) \leq \int_\Omega u_0 \zeta_0(u_0)^{p-1},$$
where $\psi^0 := (\zeta_0(\cdot))^{p-1}$. 
Moreover, since $\zeta_\delta(u_{\varepsilon,\xi}) \to \zeta_0(u_{\varepsilon,\xi})$ a.e.\ as $\delta \to 0$, it follows from \eqref{eq:u_L_inf}, \eqref{eq:u_L_inf_xi_0} and the dominated convergence theorem that
\[
\psi^\delta(u_{\varepsilon,\xi}) \to \psi^0(u_{\varepsilon,\xi})
\quad \text{strongly in } L^2(Q_T) \text{ as } \delta \to 0.
\]
Therefore, we obtain
$$
\lim_{\delta \rightarrow 0} I_\delta = \frac{\mu}{\varepsilon}\int_0^t  \langle u_{\varepsilon,\xi}(s) - \alpha(z_{\varepsilon,\xi}(s)), \zeta_0(u_{\varepsilon,\xi}(s))^{p-1} - v_{\varepsilon,\xi}^{p-1}(s)\rangle \mathrm{d}s.
$$
The integrand of the right-hand side can be written 
$$
\langle \alpha(\mu u_{\varepsilon,\xi}(s) + \zeta_0(u_{\varepsilon,\xi}(s)))  - \alpha(\mu u_{\varepsilon,\xi}(s) + v_{\varepsilon,\xi}(s)), \zeta_0(u_{\varepsilon,\xi}(s))^{p-1}- v_{\varepsilon,\xi}^{p-1}(s)\rangle,
$$
which is non-negative by the monotonicity of $\alpha$ and $s\mapsto s^{p-1}$. This concludes the proof of the energy estimate \eqref{eq:energy_ineq_v_Lp}.

By Hölder inequality, \eqref{eq:energy_ineq_v_Lp} yields
\begin{gather}
\left(\frac{\xi}{p}\right)^{1/p}\|v_{\varepsilon,\xi}(t)\|_{L^p(\Omega)} \leq \|u_0\|_{L^p(\Omega)}^{1/p}\|\zeta_0(u_0)\|_{L^p(\Omega)}^{(p-1)/p} + \left(\frac{\xi}{p}\right)^{1/p}\|v_0\|_{L^p(\Omega)},
\end{gather}
and letting $p\rightarrow \infty$, we obtain \eqref{eq:v_L_inf} in the case $\xi>0$. 

Finally, to obtain \eqref{eq:v_L_inf_xi_0}, in the case $\xi=0$, we derive a maximum principle for the elliptic equation \eqref{eq:v_elliptic}. Let $u\in L^\infty(\Omega)$ and $v\in H^1_0(\Omega)$ be the solution to \eqref{eq:v_elliptic} with $u$ fixed. We define $M := \|\alpha_0^{-1}(u)\|_{L^\infty(\Omega)} + \mu \|u\|_{L^\infty(\Omega)}$, and we observe
$$
\mu u(x) + M\geq \alpha_0^{-1}(u(x)),\quad\text{for almost all }x\in\Omega,
$$
which also implies
$$
\alpha(\mu u(x) + M) \geq u(x),\quad\text{for almost all }x\in\Omega.
$$
Therefore, it holds, for any $\varphi\in H^1_0(\Omega)$ with $\varphi(x)\geq 0$ almost everywhere,
$$
\la \nabla(v - M), \nabla\varphi\ra + \frac{\mu}{\varepsilon}\la  \alpha(\mu u + v) - \alpha(\mu u + M) , \varphi \ra \leq  0.
$$
By taking $\varphi = (v - M)^+ := \max\{0, v - M\}\in H^1_0(\Omega)$ in the above inequality, and using the monotonicity of $\alpha$, we obtain
$$
v(x) \leq M,\quad\text{for almost all }x\in\Omega.
$$
In the same way, it holds
$$
\mu u(x) - M\leq \alpha_0^{-1}(u(x)),\quad\text{for almost all }x\in\Omega,
$$
which implies, for any $\varphi\in H^1_0(\Omega)$ with $\varphi(x)\geq 0$ almost everywhere,
$$
\la \nabla(v + M), \nabla\varphi \ra + \frac{\mu}{\varepsilon} \la \alpha(\mu u + v) - \alpha(\mu u - M) , \varphi \ra \geq  0.
$$
By taking $(v + M)^- := -\min\{0, v + M\}\in H^1_0(\Omega)$, and using again the monotonicity of $\alpha$, we obtain
$$
v(x) \geq -M,\quad\text{for almost all }x\in\Omega.
$$
This concludes the proof of \eqref{eq:v_L_inf_xi_0}.
\end{proof}

Next we derive a uniform estimate on $u_{\varepsilon,\xi} - \alpha(z_{\varepsilon,\xi})$ and $\eta(z_{\varepsilon,\xi}) - v_{\varepsilon,\xi}$. 

\begin{lemma}\label{lem:cvg_alpha_z}
Denote by $L_\alpha$ the Lipschitz constant of $\alpha$ and $Q_T:=\Omega\times(0,T)$. Then the following estimate holds:
\begin{equation}
\mu L_\alpha^{-1}\|\alpha(z_{\varepsilon,\xi}) - u_{\varepsilon,\xi}\|_{L^2(Q_T)}^2 + \|\eta(z_{\varepsilon,\xi}) - v_{\varepsilon,\xi}\|_{L^2(Q_T)}^2 \leq \varepsilon\left( \int_\Omega \Phi_{\alpha_0^{-1}}(u_0) + \xi\int_\Omega \Phi_{\eta^{-1}}(v_0) \right)
\end{equation}
\end{lemma}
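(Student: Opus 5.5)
The plan is a single energy identity in which the two reaction terms do not cancel but combine into a coercive quantity. Write $R_{\varepsilon,\xi}:=u_{\varepsilon,\xi}-\alpha(z_{\varepsilon,\xi})$. The key algebraic observation is that, because $z_{\varepsilon,\xi}=\mu u_{\varepsilon,\xi}+v_{\varepsilon,\xi}$ and $\mu\alpha(z)+\eta(z)=z$ by \eqref{eq:relations_alpha_eta_zeta},
\begin{equation*}
R_{\varepsilon,\xi}=\frac{1}{\mu}\bigl(\eta(z_{\varepsilon,\xi})-v_{\varepsilon,\xi}\bigr).
\end{equation*}
I will test \eqref{eq:u} by $\mu\,\alpha^{-1}_\delta(u_{\varepsilon,\xi})$ and \eqref{eq:v} by $\eta^{-1}(v_{\varepsilon,\xi})$. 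Here $\alpha^{-1}_\delta$ is the Yosida approximant of $\alpha^{-1}$, and $\eta^{-1}$ is single-valued and Lipschitz by (H3). I will then add the two results and multiply by $\varepsilon$.

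\textbf{Time and diffusion terms.} The time terms give
\begin{equation*}
\frac{\mathrm d}{\mathrm dt}\int_\Omega\Phi_{\alpha^{-1}_\delta}(u_{\varepsilon,\xi})+\xi\frac{\mathrm d}{\mathrm dt}\int_\Omega\Phi_{\eta^{-1}}(v_{\varepsilon,\xi}).
\end{equation*}
To justify this rigorously I integrate on $(s,T)$ and let $s\to 0$, using the regularity from the existence proposition. The diffusion terms are $\mu\int(\alpha^{-1}_\delta)'(u)|\nabla u|^2\ge 0$ and $\int(\eta^{-1})'(v)|\nabla v|^2\ge0$, so they can be dropped. Both functions vanish at $0$, so the boundary terms vanish and the $\Phi$'s are nonnegative; in particular the values at time $T$ can be discarded.

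\textbf{Reaction terms.} The reaction contributions sum to $\frac{\mu}{\varepsilon}\langle R,\alpha^{-1}_\delta(u)-\eta^{-1}(v)\rangle$, which I split as
\begin{equation*}
\langle R,\alpha^{-1}_\delta(u)-z\rangle+\langle R,z-\eta^{-1}(v)\rangle .
\end{equation*}
For the second bracket, use $z=\eta^{-1}(\eta(z))$ and the identity for $R$. Since $\eta$ is $1$-Lipschitz and nondecreasing, $\eta^{-1}$ satisfies $(a-b)(\eta^{-1}(a)-\eta^{-1}(b))\ge|a-b|^2$. Hence the second bracket is at least $\mu^{-1}\|\eta(z)-v\|^2_{L^2}$. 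For the first bracket with $\delta=0$ formally, take $w\in\alpha^{-1}(u)$, so that $u=\alpha(w)$. Then $(\alpha(w)-\alpha(z))(w-z)\ge L_\alpha^{-1}|\alpha(w)-\alpha(z)|^2$ by Lipschitz continuity and monotonicity of $\alpha$. Multiplying by $\mu/\varepsilon$ yields exactly $\mu L_\alpha^{-1}\|R\|^2+\|\eta(z)-v\|^2$ on the left, and the stated inequality follows after multiplying by $\varepsilon$ and integrating over $(0,T)$.

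\textbf{Main obstacle: the removal of $\delta$.} The difficulty is that $\alpha^{-1}$ may be non-Lipschitz and multivalued, so $\alpha^{-1}(u)$ is not an admissible test function. With the resolvent $J_\delta$ one has $\alpha^{-1}_\delta(u)\in\alpha^{-1}(J_\delta u)$ and $|u-J_\delta u|=\delta|\alpha^{-1}_\delta(u)|\le\delta|\alpha^{-1}_0(u)|$. I will therefore apply the coercivity argument with $w=\alpha^{-1}_\delta(u)$ and $\alpha(w)=J_\delta u$. This produces
\begin{equation*}
L_\alpha^{-1}|J_\delta u-\alpha(z)|^2+(u-J_\delta u)\bigl(\alpha^{-1}_\delta(u)-z\bigr).
\end{equation*}
The error term is $O(\delta)$ uniformly, because $u,z$ are bounded in $L^\infty$ by Lemma~\ref{lem:L2H1_Linf}, and hence so are $\alpha^{-1}_0(u)$ and $\alpha^{-1}_\delta(u)$. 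Likewise $J_\delta u\to u$ uniformly.

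For the initial term I use $\Phi_{\alpha^{-1}_\delta}(u_0)\le\Phi_{\alpha^{-1}}(u_0)$, which follows from $|\alpha^{-1}_\delta|\le|\alpha^{-1}_0|$ with the same sign. Letting $\delta\to0$ then gives the claim.
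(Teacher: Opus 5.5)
Your proposal is correct and follows the paper's proof. It uses the same test functions $\mu\alpha^{-1}_\delta(u_{\varepsilon,\xi})$ and $\eta^{-1}(v_{\varepsilon,\xi})$, the same splitting of the reaction term through $z_{\varepsilon,\xi}$ via $u_{\varepsilon,\xi}-\alpha(z_{\varepsilon,\xi})=\mu^{-1}(\eta(z_{\varepsilon,\xi})-v_{\varepsilon,\xi})$, and the same two coercivity bounds. The only minor difference is how $\delta$ is removed: the paper first passes $J_\delta\to J_0$ using $\alpha^{-1}_\delta(u_{\varepsilon,\xi})\to\alpha^{-1}_0(u_{\varepsilon,\xi})$ in $L^2(Q_T)$ and then applies coercivity with $\alpha^{-1}_0(u_{\varepsilon,\xi})$, whereas you apply coercivity at level $\delta$ through the resolvent and control an explicit $O(\delta)$ error, and both arguments rest on the same $L^\infty$ bounds from Lemma~\ref{lem:L2H1_Linf}.
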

\begin{proof}
Recalling that in the limit $\varepsilon\rightarrow 0$, it is expected that $u=\alpha(z)$ and $v =  \eta(z)$, we test \eqref{eq:u} by $\mu\alpha^{-1}_\delta(u)$ for $\delta>0$ and \eqref{eq:v} by $\eta^{-1}(v)$ and then sum to obtain for any $t\in (0,T)$
\begin{multline*}
\frac{\mathrm{d}}{\mathrm{d} t}\int_\Omega \Phi_{\alpha^{-1}_\delta}(u_{\varepsilon,\xi}(t)) + \mu\langle \nabla u_{\varepsilon,\xi}(t), \nabla \alpha^{-1}_\delta (u_{\varepsilon,\xi}(t))\rangle \\
 + \xi\frac{\mathrm{d}}{\mathrm{d} t}\int_\Omega \Phi_{\eta^{-1}}(v_{\varepsilon,\xi}(t)) + \langle \nabla v_{\varepsilon,\xi}(t), \nabla \eta^{-1} (v_{\varepsilon,\xi}(t))\rangle
+ \frac{\mu}{\varepsilon}I_\delta(t) = 0,
\end{multline*}
with 
$$
I_\delta(t) = \langle u_{\varepsilon,\xi}(t) - \alpha(z_{\varepsilon,\xi}(t)), \alpha^{-1}_\delta(u_{\varepsilon,\xi}(t)) - \eta^{-1}(v_{\varepsilon,\xi}(t)).
$$
Both the second and fourth terms on the left-hand side are non-negative due to the monotonicity of $\alpha^{-1}_\delta$ and $\eta^{-1}$, respectively, and moreover $\Phi_{\alpha_\delta^{-1}}$ and $\Phi_{\eta}$ are both non-negative. Hence, integrating between $0$ and $T$ yields
$$
\frac{\mu}{\varepsilon}J_\delta \leq \int_\Omega \Phi_{\alpha^{-1}_\delta}(u_0) + \xi \int_\Omega \Phi_{\eta^{-1}}(v_0),
$$
with
$$
J_\delta := \int_0^T \langle u_{\varepsilon,\xi}(t) - \alpha(z_{\varepsilon,\xi}(t)), \alpha^{-1}_\delta(u_{\varepsilon,\xi}(t)) - \eta^{-1}(v_{\varepsilon,\xi}(t)) \rangle \mathrm{d}t.
$$

On the one hand, due to the properties of the Yosida approximant, it holds
$$
\int_\Omega \Phi_{\alpha^{-1}_\delta}(u_0) \leq \int_\Omega \Phi_{\alpha^{-1}_0}(u_0).
$$
One the other hand, since $\alpha^{-1}_\delta(u_{\varepsilon,\xi}) \to \alpha^{-1}_0(u_{\varepsilon,\xi})$ strongly in $L^2(Q_T)$ as $\delta \to 0$, it holds
$$
\lim_{\delta \rightarrow 0} J_\delta = \int_0^T \langle u_{\varepsilon,\xi}(t) - \alpha(z_{\varepsilon,\xi}(t)), \alpha^{-1}_0(u_{\varepsilon,\xi}(t)) - \eta^{-1}(v_{\varepsilon,\xi}(t)) \rangle \mathrm{d}t =: J_0.
$$
We note that, by definition of $\eta$,
$$
u_{\varepsilon,\xi}(t) - \alpha(z_{\varepsilon,\xi}(t)) = \frac{1}{\mu}(\mu u_{\varepsilon,\xi}(t) - \mu \alpha(z_{\varepsilon,\xi}(t))) = \frac{1}{\mu} (\eta(z_{\varepsilon,\xi}(t))-v_{\varepsilon,\xi}(t)).
$$
Therefore, the integrand in $J_0$ is equal to
$$
\begin{aligned}
\langle u_{\varepsilon,\xi}(t) - \alpha(z_{\varepsilon,\xi}(t))&, \alpha^{-1}_0(u_{\varepsilon,\xi}(t)) - z_{\varepsilon,\xi}(t) + z_{\varepsilon,\xi}(t)- \eta^{-1}(v_{\varepsilon,\xi}(t)) \rangle \\
&= \langle u_{\varepsilon,\xi}(t)) - \alpha(z_{\varepsilon,\xi}(t)), \alpha^{-1}_0(u_{\varepsilon,\xi}(t)) - z_{\varepsilon,\xi}(t)\rangle\\
&+ \frac{1}{\mu} \langle \eta(z_{\varepsilon,\xi}(t)) - v_{\varepsilon,\xi}(t), z_{\varepsilon,\xi}(t) - \eta^{-1}(v_{\varepsilon,\xi}(t))\rangle.
\end{aligned}
$$
Thus, by the monotonicity and Lipschitz continuity of $\alpha$ and $\eta$ (recall that the Lipschitz constant of $\eta$ is 1), we have  
\begin{equation}\label{eq:J0_lower_bound}
\int_0^T \left(L_{\alpha}^{-1}\|u_{\varepsilon,\xi}(t)-\alpha(z_{\varepsilon,\xi}(t))\|_{L^2(\Omega)}^2 + \frac{1}{\mu}\|\eta(z_{\varepsilon,\xi}(t))-v_{\varepsilon,\xi}(t)\|_{L^2(\Omega)}^2 \right)\mathrm{d}t \leq J_0, 
\end{equation}
which concludes the proof.
\end{proof}

\subsection{Simultaneous and parabolic-elliptic convergence}
Armed with the uniform a priori estimates derived in the previous subsection, we are now ready to prove Theorem \ref{thm:convergence_target}.

\begin{proof}[Proof of Theorem~\ref{thm:convergence_target}]
Let $\{\varepsilon_n\}_{n=1}^\infty$ and $\{\xi_n\}_{n=1}^\infty$ be arbitrary sequences such that $\varepsilon_n > 0$, $\xi_n \ge 0$, $\varepsilon_n \to 0$, and $\xi_n \to 0$ as $n \to \infty$. We denote $u_n := u_{\varepsilon_n,\xi_n}$, $v_n := v_{\varepsilon_n,\xi_n}$, and $z_n := z_{\varepsilon_n,\xi_n}$. 

By the uniform estimates established in Lemma \ref{lem:L2H1_Linf}, $\{u_n\}$ and $\{v_n\}$ are uniformly bounded in $L^2(0,T;H^1_0(\Omega))$ and $L^\infty(Q_T)$. 
Multiplying \eqref{eq:u} by $\mu$ and adding it to \eqref{eq:v}, we obtain
\begin{equation}\label{eq:z_approx}
\frac{\partial}{\partial t}(u_n + \xi_n v_n) = \Delta z_n \quad \text{in } L^2(0,T; H^{-1}(\Omega)).
\end{equation}
Since $\{z_n\}$ is uniformly bounded in $L^2(0,T;H^1_0(\Omega))$, the sequence $\{\frac{\partial}{\partial t}(u_n + \xi_n v_n)\}$ is uniformly bounded in $L^2(0,T;H^{-1}(\Omega))$. 
Applying the Aubin-Lions lemma, there exist subsequences (still denoted by $n$) and functions $u, v \in L^2(0,T; H^1_0(\Omega)) \cap L^\infty(Q_T)$ such that
\begin{align}
u_n \rightharpoonup u, \quad v_n \rightharpoonup v \quad &\text{weakly in } L^2(0,T;H^1_0(\Omega)), \label{eq:cvg_u_H10} \\
u_n + \xi_n v_n \to u \quad &\text{strongly in } L^2(Q_T). \label{eq:cvg_u_strong}
\end{align}
Because $\{\xi_n v_n\}$ converges to $0$ strongly in $L^2(Q_T)$, \eqref{eq:cvg_u_strong} implies $u_n \to u$ strongly in $L^2(Q_T)$.
Setting $z := \mu u + v$, we also have $z_n \rightharpoonup z$ weakly in $L^2(0,T;H^1_0(\Omega))$.

Next, we identify $u = \alpha(z)$. Lemma \ref{lem:cvg_alpha_z} gives $\|u_n - \alpha(z_n)\|_{L^2(Q_T)} \to 0$ as $n \to \infty$. Thus, $\alpha(z_n) \to u$ strongly in $L^2(Q_T)$.
Since $\alpha$ is a maximal monotone graph and its realization in $L^2(Q_T)$ is weakly-strongly closed, the weak convergence $z_n \rightharpoonup z$ and the strong convergence $\alpha(z_n) \to u$ yield $u = \alpha(z)$ a.e.\ in $Q_T$.

Now, we show that $z$ is a weak solution of Problem $\mathrm{(P)}$ by passing to the limit in the weak formulation. Let $\varphi \in H^1(0,T;L^2(\Omega)) \cap L^2(0,T;H^1_0(\Omega))$ be a test function satisfying $\varphi(T) = 0$. 
Multiplying \eqref{eq:z_approx} by $\varphi$, integrating over $(0,T)$, and performing integration by parts with respect to time, we obtain the weak formulation for the approximate solutions:
\begin{equation} \label{eq:weak_form_n}
- \int_0^T \la \frac{\partial \varphi}{\partial t}, u_n + \xi_n v_n \ra + \int_0^T \la \nabla z_n, \nabla \varphi \ra = \la u_n(0) + \xi_n v_n(0), \varphi(0) \ra.
\end{equation}
We pass to the limit $n \to \infty$ in each term of \eqref{eq:weak_form_n}. 
For the first term, since $u_n \to \alpha(z)$ and $\xi_n v_n \to 0$ strongly in $L^2(Q_T)$, we have
$$ \lim_{n \to \infty} \int_0^T \la \frac{\partial \varphi}{\partial t}, u_n + \xi_n v_n \ra = \int_0^T \la \frac{\partial \varphi}{\partial t}, \alpha(z) \ra. $$
For the second term, the weak convergence $z_n \rightharpoonup z$ in $L^2(0,T;H^1_0(\Omega))$ yields
$$ \lim_{n \to \infty} \int_0^T \la \nabla z_n, \nabla \varphi \ra = \int_0^T \la \nabla z, \nabla \varphi \ra. $$
For the right-hand side, recalling that the initial data for the approximate problem are given by $u_n(0) = u_0$ and $v_n(0) = v_0$, we obtain
$$ \lim_{n \to \infty} \la u_0 + \xi_n v_0, \varphi(0) \ra = \la u_0, \varphi(0) \ra. $$
By the compatibility of the initial data $u_0 = \alpha(z_0)$, the limit equation becomes
$$ - \int_0^T \la \frac{\partial \varphi}{\partial t}, \alpha(z) \ra + \int_0^T \la \nabla z, \nabla \varphi \ra = \la \alpha(z_0), \varphi(0) \ra. $$
This precisely matches the definition of the weak solution of $\mathrm{(P)}$ stated in Definition~\ref{def:weak_solution}. Thus, $z$ is a weak solution.

By Proposition~\ref{prop:uniqueness}, the weak solution of $\mathrm{(P)}$ is unique. This uniqueness guarantees that the limit $z$ is independent of the choice of the subsequence. Therefore, the whole sequence $\{z_n\}$ converges to $z$. Since the sequences $\{\varepsilon_n\}$ and $\{\xi_n\}$ were arbitrary, the convergence holds as the continuous parameters $\varepsilon \to 0$ and $\xi \to 0$.
\end{proof}

\subsection{Convergence of relaxation}

The section deals with the convergence of $(\mathrm{P}_{\varepsilon, \xi})$ to $(\mathrm{P}_{\varepsilon, 0})$ as $\xi \to 0$.
In this subsection only, we fix $\varepsilon > 0$, and denote by $(u_{\xi}, v_{\xi})$ the solution to $(\mathrm{P}_{\varepsilon, \xi})$ for $\xi>0$ and by $(u, v)$ the solution to $(\mathrm{P}_{\varepsilon, 0})$.
In what follows, $C$ denotes a generic positive constant that is independent of $\xi$, but may depend on the fixed parameter $\varepsilon$.

\begin{proof}[Proof of Theorem~\ref{thm:convergence_relaxation}]
First, from the uniform estimates established in Lemma~\ref{lem:L2H1_Linf}, $(u_\xi)$ and $(v_\xi)$ are uniformly bounded in $L^2(0,T;H^1_0(\Omega))$ and $L^\infty(Q_T)$ with respect to $\xi$. 
By the equation \eqref{eq:u} for $u_\xi$, we see that $\frac{\partial u_\xi}{\partial t}$ is bounded in $L^2(0,T;H^{-1}(\Omega))$ independently of $\xi$ (though the bound depends on $\varepsilon$).
By the Aubin-Lions lemma, we can extract a subsequence such that $u_\xi \to u$ strongly in $L^2(Q_T)$ and weakly in $L^2(0,T;H^1_0(\Omega))$. 
Taking the difference between the equations for $v_\xi$ and $v$, we obtain
\begin{equation}\label{eq:v_diff}
\xi \frac{\partial v_\xi}{\partial t} = \Delta (v_\xi - v) + \frac{\mu}{\varepsilon}(u_\xi - u) - \frac{\mu}{\varepsilon}\bigl(\alpha(\mu u_\xi + v_\xi) - \alpha(\mu u + v)\bigr).
\end{equation}
Testing \eqref{eq:v_diff} by $v_\xi - v$ and integrating over $Q_T$, we have
\begin{equation}\label{eq:v_energy_diff}
\|\nabla(v_\xi - v)\|_{L^2(Q_T)}^2 = \I + \II + \III,
\end{equation}
where
\begin{align*}
\I &= - \xi \int_0^T \left\langle \frac{\partial v_\xi}{\partial t}, v_\xi - v \right\rangle, \\
\II &= \frac{\mu}{\varepsilon} \int_0^T \langle u_\xi - u, v_\xi - v \rangle, \\
\III &= - \frac{\mu}{\varepsilon} \int_0^T \langle \alpha(\mu u_\xi + v_\xi) - \alpha(\mu u + v), v_\xi - v \rangle.
\end{align*}

We estimate each term. For $\II$, by the Cauchy-Schwarz inequality and the uniform boundedness of $v_\xi$ and $v$, we have $|\II| \le C \|u_\xi - u\|_{L^2(Q_T)}$.
For $\III$, we decompose it as
\begin{align*}
\III &= - \frac{\mu}{\varepsilon} \int_0^T \langle \alpha(\mu u_\xi + v_\xi) - \alpha(\mu u + v_\xi), v_\xi - v \rangle \\
&\quad - \frac{\mu}{\varepsilon} \int_0^T \langle \alpha(\mu u + v_\xi) - \alpha(\mu u + v), v_\xi - v \rangle.
\end{align*}
Since $\alpha$ is non-decreasing, the second term is non-positive. Since $\alpha$ is Lipschitz continuous with constant $L_\alpha$, we can estimate $\III$ by bounding the first term as follows:
\begin{equation*}
\III \le \frac{\mu^2 L_\alpha}{\varepsilon}\|u_\xi - u\|_{L^2(Q_T)}\|v_\xi - v\|_{L^2(Q_T)} \le C \|u_\xi - u\|_{L^2(Q_T)}.
\end{equation*}

To estimate $\I$, we introduce a smooth approximation $v_h$ of $v$. Let $\chi_h \in C_0^\infty(0,T)$ be a cutoff function such that $0 \le \chi_h \le 1$, $\chi_h = 1$ on $[2h, T-2h]$, and $\operatorname{supp}(\chi_h) \subset (h, T-h)$. We define $v_h := \rho_h *_t (\chi_h v)$, where $*_t$ denotes the convolution with respect to time, and $\rho_h$ is a standard mollifier in time. Then, $v_h \in C_0^\infty(0,T; H^1_0(\Omega))$ satisfies $v_h(0) = v_h(T) = 0$, $v_h \to v$ in $L^2(0,T; H^1_0(\Omega))$ as $h \to 0$, and $\|\frac{\partial v_h}{\partial t}\|_{L^2(Q_T)} \le \frac{C}{h}$.
We rewrite $\I$ as $\I = \I_1 + \I_2 + \I_3$, where
\begin{align*}
\I_1 &:= - \xi \int_0^T \left\langle \frac{\partial v_\xi}{\partial t}, v_\xi \right\rangle = - \frac{\xi}{2} \|v_\xi(T)\|_{L^2(\Omega)}^2 + \frac{\xi}{2} \|v_0\|_{L^2(\Omega)}^2 \le C \xi, \\
\I_3 &:= \xi \int_0^T \left\langle \frac{\partial v_\xi}{\partial t}, v_h \right\rangle = - \xi \int_0^T \left\langle v_\xi, \frac{\partial v_h}{\partial t} \right\rangle \le \xi \|v_\xi\|_{L^2(Q_T)} \left\| \frac{\partial v_h}{\partial t} \right\|_{L^2(Q_T)} \le C \frac{\xi}{h}.
\end{align*}
For $\I_2$, we note that the right-hand side of the equation \eqref{eq:v} for $v_\xi$ is bounded in $L^2(0,T;H^{-1}(\Omega))$ independently of $\xi$. Thus, $\xi \frac{\partial v_\xi}{\partial t}$ is bounded in $L^2(0,T;H^{-1}(\Omega))$, which allows us to estimate:
\begin{align*}
\I_2 := \xi \int_0^T \left\langle \frac{\partial v_\xi}{\partial t}, v - v_h \right\rangle 
&\le \left\| \xi \frac{\partial v_\xi}{\partial t} \right\|_{L^2(0,T;H^{-1}(\Omega))} \|v - v_h\|_{L^2(0,T;H^1_0(\Omega))} \\
&\le C \|v - v_h\|_{L^2(0,T;H^1_0(\Omega))}.
\end{align*}

Combining the estimates for $\I, \II, \III$, we obtain
\begin{equation*}
\|\nabla(v_\xi - v)\|_{L^2(Q_T)}^2 \le C \left( \xi + \|v - v_h\|_{L^2(0,T;H^1_0(\Omega))} + \frac{\xi}{h} + \|u_\xi - u\|_{L^2(Q_T)} \right).
\end{equation*}
Taking the limit superior as $\xi \to 0$ for a fixed $h > 0$, and using the strong convergence $u_\xi \to u$ in $L^2(Q_T)$, we deduce
\begin{equation*}
\limsup_{\xi \to 0} \|\nabla(v_\xi - v)\|_{L^2(Q_T)}^2 \le C \|v - v_h\|_{L^2(0,T;H^1_0(\Omega))}.
\end{equation*}
Since this holds for any $h > 0$, passing to the limit as $h \to 0$ yields $\lim_{\xi \to 0} \|\nabla(v_\xi - v)\|_{L^2(Q_T)} = 0$. By the Poincar\'e inequality, $v_\xi \to v$ strongly in $L^2(0,T;H^1_0(\Omega))$.
\end{proof}

\subsection{Convergence via $(P_{0,\xi})$}
Finally, we establish the alternative iterated limit stated in Theorem \ref{thm:iterated_limit_eps_xi}, where we first send the reaction relaxation parameter $\varepsilon \to 0$ and subsequently take the time-regularization parameter $\xi \to 0$.

\begin{proof}[Proof of Theorem \ref{thm:iterated_limit_eps_xi}]
\textbf{Step 1: Convergence as $\varepsilon \to 0$ for fixed $\xi > 0$.}
Let $\xi \in (0, 1/\mu]$ be fixed, and let $\{\varepsilon_n\}_{n=1}^\infty$ be a sequence such that $\varepsilon_n \to 0$. We use the simplified notations $u_n, v_n,$ and $z_n$ as in the previous subsections. 
By Lemma~\ref{lem:L2H1_Linf}, $\{u_n\}$ and $\{v_n\}$ are uniformly bounded in $L^2(0,T;H^1_0(\Omega))$ and $L^\infty(Q_T)$, and we obtain the relation \eqref{eq:z_approx} for the fixed $\xi$. 
Applying the Aubin-Lions lemma exactly as before, we deduce that, up to a subsequence, $u_n + \xi v_n \to w_\xi$ strongly in $L^2(Q_T)$.

Using the relation $z_n = \mu u_n + v_n$ and the definition of $\alpha_\xi$, we have the algebraic identity:
\begin{equation*}
(u_n + \xi v_n) - \alpha_\xi(z_n) = (1 - \mu\xi)(u_n - \alpha(z_n)).
\end{equation*}
By Lemma \ref{lem:cvg_alpha_z}, $u_n - \alpha(z_n) \to 0$ strongly in $L^2(Q_T)$, which implies $\alpha_\xi(z_n) \to w_\xi$ strongly in $L^2(Q_T)$. 
Since $1 - \mu\xi \ge 0$, $\alpha_\xi$ is strongly monotone, and its inverse $\alpha_\xi^{-1}$ is $(1/\xi)$-Lipschitz continuous. Thus, $z_n \to \alpha_\xi^{-1}(w_\xi) =: z_\xi$ strongly in $L^2(Q_T)$. 
The strong convergence $z_n \to z_\xi$ and the continuity of $\alpha_\xi$ imply $\alpha_\xi(z_n) \to \alpha_\xi(z_\xi)$ strongly in $L^2(Q_T)$. 
Passing to the limit in the weak formulation proceeds in the exact same manner as in the proof of Theorem~\ref{thm:convergence_target}, establishing that $z_\xi$ is a weak solution of $\mathrm{(P_{0,\xi})}$. 
The uniqueness of the weak solution to this strictly parabolic problem, Proposition~\ref{prop:uniqueness}, guarantees the convergence of the whole sequence as $\varepsilon \to 0$.

\textbf{Step 2: Convergence as $\xi \to 0$.}
Let $\{\xi_m\}_{m=1}^\infty$ be a sequence tending to $0$. Since the uniform estimates in Lemma \ref{lem:L2H1_Linf} are independent of $\varepsilon$ and $\xi$, the limit sequence $\{z_{\xi_m}\}$ inherits these bounds. 
Therefore, $\{z_{\xi_m}\}$ is bounded in $L^2(0,T;H^1_0(\Omega))$ and $L^\infty(Q_T)$, and $\{\frac{\partial}{\partial t}\alpha_{\xi_m}(z_{\xi_m})\}$ is bounded in $L^2(0,T;H^{-1}(\Omega))$. 
By the Aubin-Lions lemma, there exists a subsequence and a function $w \in L^2(Q_T)$ such that $\alpha_{\xi_m}(z_{\xi_m}) \to w$ strongly in $L^2(Q_T)$.

Since $\alpha_{\xi_m}(z_{\xi_m}) = \xi_m z_{\xi_m} + (1 - \mu\xi_m)\alpha(z_{\xi_m})$ and the sequences are uniformly bounded in $L^\infty(Q_T)$, the terms multiplied by $\xi_m$ vanish in the limit, yielding $\alpha(z_{\xi_m}) \to w$ strongly in $L^2(Q_T)$. 
From this point, the identification $w = \alpha(z)$ and the passage to the limit in the weak formulation are completely identical to the corresponding steps in the proof of Theorem~\ref{thm:convergence_target}. 
Finally, the uniqueness of the weak solution $z$ (Proposition \ref{prop:uniqueness}) ensures the convergence of the whole sequence as $\xi \to 0$.
\end{proof}

\section{Numerical experiments}
\label{sec:NE}

In this section, we introduce a finite-difference approximation of $(\mathrm{P}_{\varepsilon,\xi})$ and provide numerical evidence of the convergence of $\mu u_{\varepsilon, \xi} + v_{\varepsilon, \xi}$ towards the solution of $(\mathrm{P})$ as $(\varepsilon,\xi)\to(0,0)$. Throughout, we restrict ourselves to
\begin{equation}\label{eq:alpha_FDE}
\alpha(s) = |s|^{q-2}s,\quad s\in\mathbb{R},
\end{equation}
with $q>2$, so that $(\mathrm{P})$ reduces to the fast diffusion equation. We refer to \cite{BonforteFigalli24} for a review on the Cauchy-Dirichlet problem for the fast diffusion equation.

\subsection{Numerical scheme}
We discretize $(\mathrm{P}_{\varepsilon,\xi})$ on $\Omega=(0,L)^d$ by a finite-difference scheme. In all experiments, we set $L=1$ and $q=2.5$. The spatial domain is discretized with mesh size $h$, and the time interval $[0,T]$ with time step $\Delta t$. We denote by $(x_i)_{i\in\mathcal{I}}\subset (0,L)^d$ the spatial grid points, where $\mathcal{I}:=\{0,\dots,(L/h-1)^d\}$, and by $(t^n)_{n\in\mathcal{N}}\subset[0,T]$ the temporal nodes, where $\mathcal{N}:=\{0,\dots,T/\Delta t\}$. For a discrete function $(u_i^n)_{n\in\mathcal{N},\,i\in\mathcal{I}}\subset\mathbb{R}$, $u_i^n$ approximates $u(x_i,t^n)$, and we set $\mathbf{u}^n:=(u_i^n)_{i\in\mathcal{I}}$.

Because the reaction terms in $(\mathrm{P}_{\varepsilon,\xi})$ become stiff for small $\varepsilon$, while the diffusion in the equation for $v$ is large for small $\xi$, we employ a fully implicit discretization to guarantee stability. The scheme is given by
\begin{equation}\label{eq:scheme}
\left\{
\begin{aligned}
    &\frac{1}{\mu}\frac{u^{n+1}_i - u^n_i}{\Delta t} =  [\Delta_{h} \mathbf{u}^{n+1}]_i - \frac{1}{\varepsilon}(u^{n+1}_i - \alpha(\mu u^{n+1}_i + v^{n+1}_i)),\quad n\in\mathcal{N},\,i\in\mathcal{I}\\
    &\xi \frac{v^{n+1}_i - v^{n}_i}{\Delta t} = [\Delta_{h} \mathbf{v}^{n+1}]_i + \frac{\mu}{\varepsilon}(u^{n+1}_i - \alpha(\mu u^{n+1}_i + v^{n+1}_i)),\quad n\in\mathcal{N},\,i\in\mathcal{I},\\
    &u^0_i = u_0(x_i),\quad v^0_i = v_0(x_i),\quad i\in\mathcal{I}
\end{aligned}
\right.
\end{equation}
where $\Delta_h:\mathbb{R}^{|\mathcal{I}|}\to \mathbb{R}^{|\mathcal{I}|}$ denotes the standard finite difference approximation of the Laplacian (using 3 points in 1D, 5 points in 2D) with zero Dirichlet boundary conditions. It is easily seen that the scheme \eqref{eq:scheme} is asymptotic-preserving as $\varepsilon, \xi \to 0$, i.e. the \eqref{eq:scheme} converges to an implicit scheme for the fast diffusion equation as $\varepsilon, \xi \to 0$ with fixed $\Delta t$ and $h$.

\subsection{Initial data}
For $\alpha$ defined by \eqref{eq:alpha_FDE}, problem $(\mathrm{P})$ admits separate variable solutions \cite{BerrymanHolland80}. We use these explicit profiles to quantify the convergence of $\mu u_{\varepsilon, \xi} + v_{\varepsilon,\xi}$ towards the solution of $(\mathrm{P})$ as $\varepsilon,\xi\to0$. The initial values of these separate variable solutions are obtained as solutions of the Lane-Emden-Fowler equation
\begin{equation}\label{eq:ini_data}
\left\{
\begin{aligned}
    -\Delta z &= c |z|^{q-2}z,\quad \text{in }\Omega,\\
    z &= 0\quad\text{on }\partial\Omega.
\end{aligned}
\right.
\end{equation}
for $c>0$. When $d\leq 2$, or when $d\geq 3$ and $q<2d/(d-2)$, standard variational methods yield a positive solution in $H^1_0(\Omega)\cap L^q(\Omega)$ for at least one value of $c$ \cite{Struwe}, and a scaling argument then shows existence for every $c>0$. Moreover, when $\Omega=(0,L)^d$, this positive solution is unique \cite{DamascelliGrossiPacella99}. We denote by $z_0$ the unique positive solution of \eqref{eq:ini_data} normalized by $\|z_0\|_{L^q(\Omega)}=1$ (hence $c=\|\nabla z_0\|_{L^2(\Omega)}^2$), and set
\begin{gather}
    u_0 = \alpha(z_0), \label{eq:data_ini_u0}\\
    v_0 =  z_0 - \mu\alpha(z_0), \label{eq:data_ini_v0}
\end{gather}
with $\mu$ chosen so that $\mu < 1/\|\alpha'(z_0)\|_{L^\infty(\Omega)}$. The solution of $(\mathrm{P})$ with initial datum $z_0$ is given by \cite{BerrymanHolland80}
\begin{equation}\label{eq:separate_variables}
z_\star(x,t)=\left(1-\frac{t}{T_\star}\right)_+^{1/(q-2)}z_0(x),\quad t\in\mathbb{R}_+,\ x\in\Omega,
\end{equation}
where $(s)_+:=\max(s,0)$ denotes the positive part of $s$ and 
$$
T_\star:= \frac{q-1}{q-2}\|\nabla z_0\|_{L^2(\Omega)}^{-2}.
$$
In particular, solutions of the form \eqref{eq:separate_variables} \emph{extinguish in finite time}. This property in fact holds for more general initial data and is a characteristic feature of the fast diffusion equation \cite{BonforteFigalli24}. To compute $u_0$ and $v_0$, we solve \eqref{eq:ini_data} by Newton's method on a refined mesh, using a spatial step ten times smaller than the value of $h$ used in \eqref{eq:scheme}, and with a tolerance of $10^{-10}$ on the increment. The initial datum $z_0$ on $\Omega = (0,L)^d$ for $d=1,2$ is represented in Figure \ref{fig:ini_data}. The associated extinction time is $T_\star\approx 0.326$ for $d=1$ and $T_\star\approx 0.175$ for $d=2$. Moreover, we take $\mu=0.5$ for $d=1$, and $\mu=0.4$ for $d=2$.

\begin{figure}[htbp]
\centering
\hspace{0.7cm}
\begin{subfigure}{0.43\textwidth}
\centering
\includegraphics[width=\linewidth]{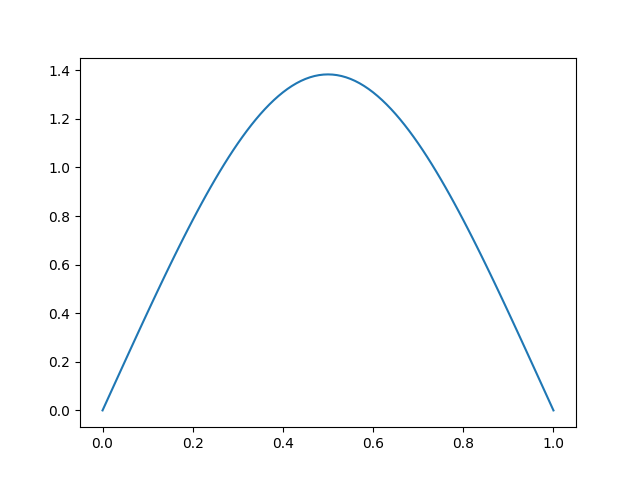}
\caption{$d=1$}
\end{subfigure}
\hfill
\begin{subfigure}{0.45\textwidth}
\centering
\includegraphics[width=\linewidth]{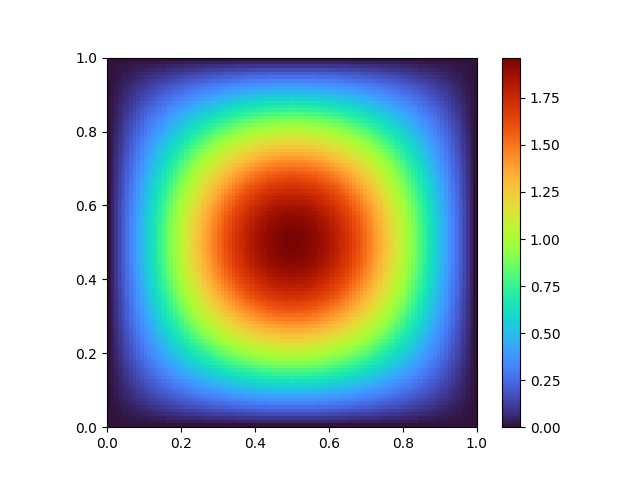}
\caption{$d=2$}
\end{subfigure}

\caption{Initial datum $z_0$ in one and two spatial dimensions. The initial data $u_0$ and $v_0$ for \eqref{eq:scheme} are taken accordingly to \eqref{eq:data_ini_u0},\eqref{eq:data_ini_v0}. $z_0$ is the unique positive solution to \eqref{eq:ini_data} with normalization $\|z_0\|_{L^q(\Omega)}=1$.}
\label{fig:ini_data}
\end{figure}

\subsection{Convergence to $(\mathrm{P})$}
In this subsection, we numerically test the convergence as $\varepsilon\to 0$ of $\mu u_{\varepsilon,\xi} + v_{\varepsilon, \xi}$ to the solution of $(\mathrm{P})$. The solution to the scheme \eqref{eq:scheme} is computed, with initial data given by \eqref{eq:data_ini_u0}, \eqref{eq:data_ini_v0}, and compared, on the nodes of the discretization, to the solution $z_*$ \eqref{eq:separate_variables} of $(\mathrm{P})$ with initial data $z_0$. In all tests, we take $\Delta t = 0.0001$ and $h=0.01$. For a fixed $\Delta t$ and $h$ the discrete solution to \eqref{eq:scheme} does not converge to the continuous solution $z_\star$ as $\varepsilon,\xi\to 0$. Instead it converges to the discrete solution of an implicit scheme for the fast diffusion equation. Nevertheless, $\Delta$ and $h$ are taken sufficiently small to observe convergence to the continuous solution $z_\star$ for values of $\varepsilon$ down to $10^{-4}$.

Results are represented in Figure \ref{fig:cvg_1d} in the one dimensional case and Figure \ref{fig:cvg_2d} in the two dimensional case. It shows a convergence of order $1$ in $\varepsilon$, both for the cases $\xi=0$ and $\xi=\varepsilon$, for the $\ell^2_{\Delta t, h}(Q_T)$ norm, where
$$
\|(z_i^n)_{n\in\mathcal{N}, i\in\mathcal{I}}\|_{\ell^2_{\Delta t, h}(Q_T)}^2:=\sum_{i\in\mathcal{I}, n\in\mathcal{N}}|z_i^n|^2 h^d\Delta t.
$$

\begin{figure}
    \centering
    \begin{subfigure}{0.48\textwidth}
        \centering

        \begin{tikzpicture}
        \begin{axis}[
            width=\linewidth,
            xmode=log,
            ymode=log,
            xlabel=$\varepsilon$,
            grid=major,
            legend pos=south east
        ]

        \addplot table [col sep=comma,x=eps,y=L2_error] {L2_error_xi_0.csv};
        \addlegendentry{$\|z_{\varepsilon,0} - z_\star\|_{\ell^2_{\Delta t, h}(Q_T)}$ }
        
        \addplot[dashed,domain=5e-4:1e-1] {x};
        \addlegendentry{$\varepsilon$}
        
        \end{axis}
        \end{tikzpicture}
        
        \caption{$\xi=0$}
        \label{subfig:1d_xi_0}
    \end{subfigure}%
    \hfill
    \begin{subfigure}{0.48\textwidth}
        \centering
        \begin{tikzpicture}
        \begin{axis}[
            width=\linewidth,
            xmode=log,
            ymode=log,
            xlabel=$\varepsilon$,
            grid=major,
            legend pos=south east
        ]
        
        \addplot table [col sep=comma,x=eps,y=L2_error] {L2_error_xi_eps.csv};
        \addlegendentry{$\|z_{\varepsilon,\xi} - z_\star\|_{\ell^2_{\Delta t, h}(Q_T)}$ }
        
        \addplot[dashed,domain=5e-4:1e-1] {x};
        \addlegendentry{$\varepsilon$}
        
        \end{axis}
        \end{tikzpicture}
        
        \caption{$\xi=\varepsilon$}
        \label{subfig:1d_xi_eps}
    \end{subfigure}
    
    \caption{Convergence for $\varepsilon\to 0$ and $\xi=0$ (Figure \ref{subfig:1d_xi_0}) or $\xi=\varepsilon$ (Figure \ref{subfig:1d_xi_eps}). $\mu=0.5$, $q=2.5$, $T=0.6$, $\Omega = (0,1)$, $\Delta t = 10^{-4}$, $\Delta x = 10^{-2}$.}
    \label{fig:cvg_1d}
\end{figure}
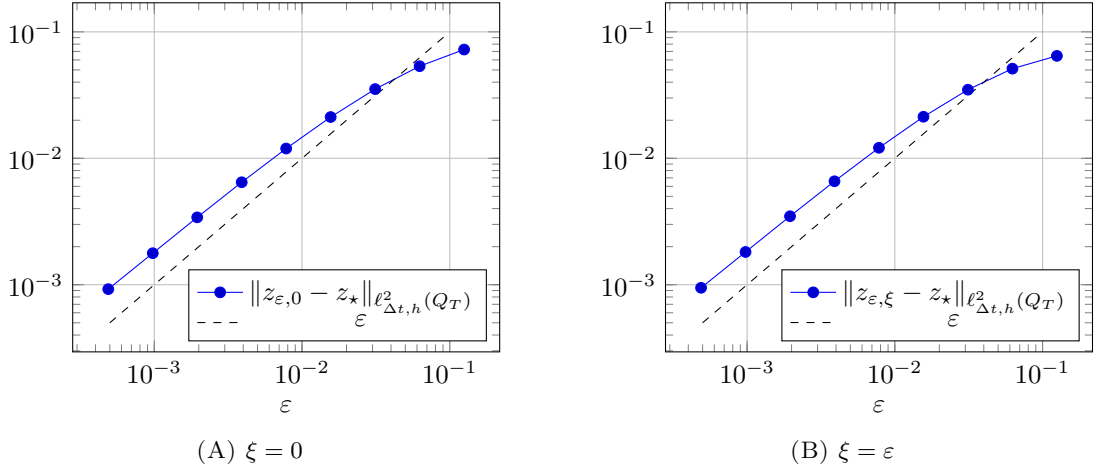

\begin{figure}
    \centering
    \begin{subfigure}{0.48\textwidth}
        \centering

        \begin{tikzpicture}
        \begin{axis}[,
            width=\linewidth,
            xmode=log,
            ymode=log,
            xlabel=$\varepsilon$,
            grid=major,
            legend pos=south east
        ]
        
        \addplot table [col sep=comma,x=eps,y=L2_error] {L2_error_xi_0_2d.csv};
        \addlegendentry{$\|z_{\varepsilon,0} - z_\star\|_{\ell^2_{\Delta t, h}(Q_T)}$ }
        
        \addplot[dashed,domain=5e-4:1e-1] {10*x};
        \addlegendentry{$10\varepsilon$}
        
        \end{axis}
        \end{tikzpicture}
        
        \caption{$\xi=0$}
        \label{subfig:1d_xi_0_2d}
    \end{subfigure}%
    \hfill
    \begin{subfigure}{0.48\textwidth}
        \centering
        \begin{tikzpicture}
        \begin{axis}[
            width=\linewidth,
            xmode=log,
            ymode=log,
            xlabel=$\varepsilon$,
            grid=major,
            legend pos=south east
        ]
        
        \addplot table [col sep=comma,x=eps,y=L2_error] {L2_error_xi_eps_2d.csv};
        \addlegendentry{$\|z_{\varepsilon,\xi} - z_\star\|_{\ell^2_{\Delta t, h}(Q_T)}$ }
        
        \addplot[dashed,domain=5e-4:1e-1] {10*x};
        \addlegendentry{$10\varepsilon$}
        
        \end{axis}
        \end{tikzpicture}
        
        \caption{$\xi=\varepsilon$}
        \label{subfig:1d_xi_eps_2d}
    \end{subfigure}
    
    \caption{Convergence for $\varepsilon\to 0$ and $\xi=0$ (Figure \ref{subfig:1d_xi_0_2d}) or $\xi=\varepsilon$ (Figure \ref{subfig:1d_xi_eps_2d}). $\mu=0.4$, $q=2.5$, $T=0.18$, $\Omega = (0,1)^2$, $\Delta t = 10^{-4}$, $\Delta x = 10^{-2}$.}
    \label{fig:cvg_2d}
\end{figure}
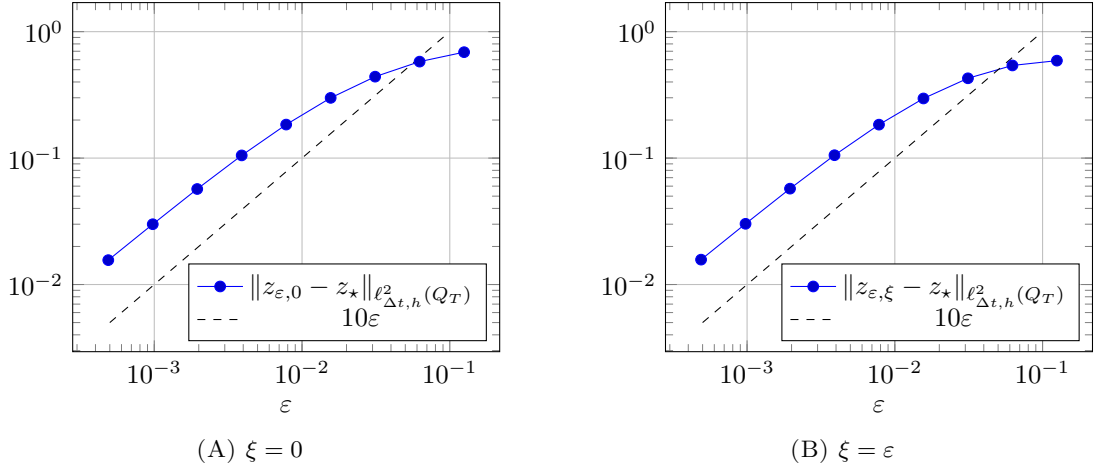

\subsection{Extinction phenomenon}
Solutions to the Cauchy–Dirichlet problem for the fast diffusion equation are known to extinguish in finite time; that is, for any initial data $z_0$, there exists a finite time $t_*(z_0)$ such that the solution of $(\mathrm{P})$ satisfies $z(x,t)=0$ for all $t > t_*(z_0)$ and $x \in \Omega$. In contrast, the numerical scheme \eqref{eq:scheme} does not appear to exhibit this extinction phenomenon. Indeed, from \eqref{eq:scheme} it follows that if $\mathbf{u}^{n+1} = \mathbf{v}^{n+1} = 0$, then $\mathbf{u}^{n} = \mathbf{v}^{n} = 0$. This implies that the only discrete solution extinguishing in finite time is the trivial one, corresponding to initial data $\mathbf{u}^0 = \mathbf{v}^0 = 0$. However, Figure \ref{fig:extinction} shows that $\|\mathbf{z}^n\|_{\ell^q_h(\Omega)}$ decays with an exponential rate past the extinction time $T_*$ of the true solution $z_*$. Here,
$$
\|\mathbf{z}^n\|_{\ell^q_h(\Omega)}^q= \sum_{i\in\mathcal{I}} |z_i^n|^q h^d.
$$

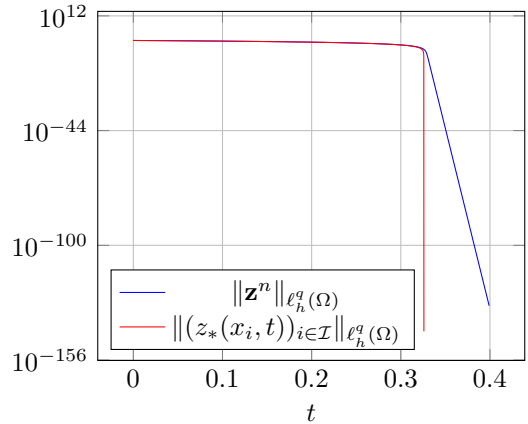
\begin{figure}
\centering

\begin{tikzpicture}
\begin{semilogyaxis}[
    width = 0.48\textwidth,
    xlabel={$t$},
    grid=major,
    legend pos=south west,
    legend image post style={mark=none}
]

\addplot table [col sep=comma, mark=none, x=t,y=Lq_norm] {Lq_norm.csv};
\addlegendentry{$\|\mathbf{z}^n\|_{\ell^q_h(\Omega)}$}

\addplot table [col sep=comma, mark=none, x=t,y=Lq_norm] {Lq_norm_true.csv};
\addlegendentry{$\|(z_*(x_i,t))_{i\in\mathcal{I}}\|_{\ell^q_h(\Omega)}$}

\end{semilogyaxis}
\end{tikzpicture}

\caption{Evolution of $\|\mathbf{z}^n\|_{\ell^q_h(\Omega)}$ and $\|(z_*(x_i,t))_{i\in\mathcal{I}}\|_{\ell^q_h(\Omega)}$. $\xi=\varepsilon=10^{-4}$, $\mu=0.5$, $q=2.5$, $\Omega=(0,1)$, $\Delta t=10^{-4}$, $h=10^{-2}$, and $u_0, v_0$ defined by \eqref{eq:ini_data}, \eqref{eq:data_ini_u0}, \eqref{eq:data_ini_v0}.}
\label{fig:extinction}
\end{figure}

The numerical results presented in this section demonstrate that the proposed reaction-diffusion system serves as a valid approximation framework for the fast diffusion equation, regardless of whether the system is parabolic-elliptic ($\xi=0$) or fully parabolic ($\xi>0$). Moreover, while the discrete scheme does not reproduce the finite-time extinction property in a strict sense, it successfully captures the rapid exponential decay of the solution beyond the extinction time of the limit problem. 

\section*{Acknowledgments}
The authors would like to thank Professor Goro Akagi for his valuable advice on the analysis of maximal monotone operators. 
HM was partially supported by JSPS KAKENHI Grant Numbers 24H00188 and 21KK0044.




\bibliography{references}

@book{barbu,
  author    = {Barbu, Viorel},
  title     = {Nonlinear Differential Equations of Monotone Types in Banach Spaces},
  publisher = {Springer},
  year      = {2010},
  series    = {Springer Monographs in Mathematics},
  address   = {New York},
  doi       = {10.1007/978-1-4419-5542-5}
}

@article {ninomiya2017rda,
    AUTHOR = {Ninomiya, Hirokazu and Tanaka, Yoshitaro and Yamamoto, Hiroko},
     TITLE = {Reaction, diffusion and non-local interaction},
   JOURNAL = {J. Math. Biol.},
  FJOURNAL = {Journal of Mathematical Biology},
    VOLUME = {75},
      YEAR = {2017},
    NUMBER = {5},
     PAGES = {1203--1233},
      ISSN = {0303-6812,1432-1416},
   MRCLASS = {35K57 (35A35 35B36 35K58 35R09 92C15 93A30)},
  MRNUMBER = {3694699},
       DOI = {10.1007/s00285-017-1113-x},
       URL = {https://doi.org/10.1007/s00285-017-1113-x},
}

@article{Murakawa2007,
doi = {10.1088/0951-7715/20/10/003},
url = {https://doi.org/10.1088/0951-7715/20/10/003},
year = {2007},
month = {},
publisher = {},
volume = {20},
number = {10},
pages = {2319},
author = {Murakawa, H},
title = {Reaction-diffusion system approximation to degenerate parabolic systems},
journal = {Nonlinearity}
}

@misc{murakawa_tanaka2024,
      title={Keller-Segel type approximation for nonlocal Fokker-Planck equations in one-dimensional bounded domain}, 
      author={Hideki Murakawa and Yoshitaro Tanaka},
      year={2024},
      eprint={2402.11511},
      archivePrefix={arXiv},
      primaryClass={math.AP},
      journal = {arXiv},
      url={https://arxiv.org/abs/2402.11511}, 
}

@article{evans,
  author  = {Evans, L. C.},
  title   = {A convergence theorem for a chemical diffusion-reaction system},
  journal = {Houston J. Math.},
  volume  = {6},
  number  = {2},
  pages   = {259--267},
  year    = {1980}
}

@incollection{ehhp,
  author    = {Eymard, R. and Hilhorst, D. and van der Hout, R. and Peletier, L. A.},
  title     = {A reaction-diffusion system approximation of a one-phase {S}tefan problem},
  booktitle = {Optimal Control and Partial Differential Equations},
  editor    = {Menaldi, J. L. and Rofman, E. and Sulem, A.},
  publisher = {IOS Press},
  address   = {Amsterdam},
  pages     = {156--170},
  year      = {2000}
}

@article{himn,
  author  = {Hilhorst, D. and Iida, M. and Mimura, M. and Ninomiya, H.},
  title   = {A competition-diffusion system approximation to the classical two-phase {S}tefan problem},
  journal = {Japan J. Indust. Appl. Math.},
  volume  = {18},
  number  = {2},
  pages   = {161--180},
  year    = {2001},
  doi     = {10.1007/BF03167406}
}

@incollection{in,
  author    = {Iida, M. and Ninomiya, H.},
  title     = {A reaction-diffusion approximation to a cross-diffusion system},
  booktitle = {Recent Advances on Elliptic and Parabolic Issues},
  editor    = {Chipot, M. and Ninomiya, H.},
  publisher = {World Scientific},
  pages     = {145--164},
  year      = {2006},
  doi       = {10.1142/9789812773951_0007}
}

@article{m_lscds,
	author = {Murakawa, H.},
	title = {A linear scheme to approximate nonlinear cross-diffusion systems},
	DOI= "10.1051/2an/2011010",
	url= "https://doi.org/10.1051/m2an/2011010",
	journal = {ESAIM: M2AN},
	year = 2011,
	volume = 45,
	number = 6,
	pages = "1141-1161",
}

@article{m_elscds,
  author  = {Murakawa, H.},
  title   = {An efficient linear scheme to approximate nonlinear diffusion problems},
  journal = {Japan J. Indust. Appl. Math.},
  volume  = {35},
  number  = {1},
  pages   = {71--101},
  year    = {2018},
  doi     = {10.1007/s13160-017-0279-3}
}

@article{skt,
  author  = {Shigesada, N. and Kawasaki, K. and Teramoto, E.},
  title   = {Spatial segregation of interacting species},
  journal = {J. Theoret. Biol.},
  volume  = {79},
  number  = {1},
  pages   = {83--99},
  year    = {1979},
  doi     = {10.1016/0022-5193(79)90258-3}
}

@article{IshiiMurakawaTanaka2026,
      title={Relationship between haptotaxis and chemotaxis in cell dynamics}, 
      author={Hiroshi Ishii and Hideki Murakawa and Yoshitaro Tanaka},
      year={2025},
journal = {arXiv, arxiv.org/abs/2503.00280},
      eprint={2503.00280},
      archivePrefix={arXiv},
      primaryClass={math.AP},
      url={https://arxiv.org/abs/2503.00280}, 
}

@book{VazquezPME2006,
  author    = {Juan Luis V{\'a}zquez},
  title     = {The Porous Medium Equation: Mathematical Theory},
  series    = {Oxford Mathematical Monographs},
  publisher = {Oxford University Press},
  address   = {Oxford},
  year      = {2006},
  isbn      = {9780198569039},
  doi       = {10.1093/acprof:oso/9780198569039.001.0001}
}

@book{VazquezSmoothing2006,
  author    = {Juan Luis V{\'a}zquez},
  title     = {Smoothing and Decay Estimates for Nonlinear Diffusion Equations: Equations of Porous Medium Type},
  publisher = {Oxford University Press},
  address   = {Oxford},
  year      = {2006},
  isbn      = {9780199202973},
  doi       = {10.1093/acprof:oso/9780199202973.001.0001}
}

@book{Rubinstein1971Stefan,
  author    = {L. I. Rubinshtein},
  title     = {The Stefan Problem},
  series    = {Translations of Mathematical Monographs},
  volume    = {27},
  publisher = {American Mathematical Society},
  address   = {Providence, RI},
  year      = {1971},
  isbn      = {978-1-4704-2850-1}
}

@article{AltLuckhaus1983,
  author  = {H. W. Alt and S. Luckhaus},
  title   = {Quasilinear elliptic-parabolic differential equations},
  journal = {Mathematische Zeitschrift},
  volume  = {183},
  number  = {3},
  pages   = {311--341},
  year    = {1983},
  doi     = {10.1007/BF01176474}
}

@article {DiBenedettoFriedman1984,
    AUTHOR = {DiBenedetto, Emmanuele and Friedman, Avner},
     TITLE = {The ill-posed {H}ele-{S}haw model and the {S}tefan problem for
              supercooled water},
   JOURNAL = {Trans. Amer. Math. Soc.},
  FJOURNAL = {Transactions of the American Mathematical Society},
    VOLUME = {282},
      YEAR = {1984},
    NUMBER = {1},
     PAGES = {183--204},
      ISSN = {0002-9947,1088-6850},
   MRCLASS = {35R35 (35K05 80A20)},
  MRNUMBER = {728709},
MRREVIEWER = {Antonio\ Fasano},
       DOI = {10.2307/1999584},
       URL = {https://doi.org/10.2307/1999584},
}

@article{BonforteFigalli24,
 author = {Bonforte, Matteo and Figalli, Alessio},
 title = {The {Cauchy}-{Dirichlet} problem for the fast diffusion equation on bounded domains},
 fjournal = {Nonlinear Analysis. Theory, Methods \& Applications. Series A: Theory and Methods},
 journal = {Nonlinear Anal., Theory Methods Appl., Ser. A, Theory Methods},
 issn = {0362-546X},
 volume = {239},
 pages = {55},
 note = {Id/No 113394},
 year = {2024},
 language = {English},
 doi = {10.1016/j.na.2023.113394},
}

@article{DamascelliGrossiPacella99,
title = {Qualitative properties of positive solutions of semilinear elliptic equations in symmetric domains via the maximum principle},
journal = {Annales de l'Institut Henri Poincar\'e C, Analyse non lin\'eaire},
volume = {16},
number = {5},
pages = {631-652},
year = {1999},
issn = {0294-1449},
doi = {https://doi.org/10.1016/S0294-1449(99)80030-4},
url = {https://www.sciencedirect.com/science/article/pii/S0294144999800304},
author = {Lucio Damascelli and Massimo Grossi and Filomena Pacella},
}

@article{BerrymanHolland80,
 author = {Berryman, James G. and Holland, Charles J.},
 title = {Stability of the separable solution for fast diffusion},
 fjournal = {Archive for Rational Mechanics and Analysis},
 journal = {Arch. Ration. Mech. Anal.},
 issn = {0003-9527},
 volume = {74},
 pages = {379--388},
 year = {1980},
 language = {English},
 doi = {10.1007/BF00249681},
 zbMATH = {3717792},
 Zbl = {0458.35046}
}

@book{Struwe,
 author = {Struwe, Michael},
 title = {Variational methods. {Applications} to nonlinear partial differential equations and {Hamiltonian} systems.},
 edition = {4th ed.},
 fseries = {Ergebnisse der Mathematik und ihrer Grenzgebiete. 3. Folge},
 series = {Ergeb. Math. Grenzgeb., 3. Folge},
 issn = {0071-1136},
 volume = {34},
 isbn = {978-3-540-74012-4},
 year = {2008},
 publisher = {Berlin: Springer},
 language = {English},
 doi = {10.1007/978-3-540-74013-1},
 zbMATH = {5203477},
 Zbl = {1284.49004}
}

@book{CH,
 author = {Cazenave, Thierry and Haraux, Alain},
 title = {An introduction to semilinear evolution equations. {Transl}. by {Yvan} {Martel}.},
 edition = {Revised ed.},
 fseries = {Oxford Lecture Series in Mathematics and its Applications},
 series = {Oxf. Lect. Ser. Math. Appl.},
 volume = {13},
 isbn = {0-19-850277-X},
 year = {1998},
 publisher = {Oxford: Clarendon Press},
 language = {English},
 zbMATH = {1219622},
 Zbl = {0926.35049}
}
\bibliographystyle{abbrv}

\end{document}